\title{A maximum smoothed likelihood estimator in the current status continuous mark model}
\author{Piet Groeneboom, Geurt Jongbloed \& Birgit Witte}
\def\A{\ensuremath{\mathcal A}}
\def\B{\ensuremath{\mathcal B}}
\def\C{\ensuremath{\mathcal C}}
\def\E{\ensuremath{{\mbox{\rm E}}}}
\def\F{\ensuremath{\mathcal F}}
\def\H{\ensuremath{\mathcal H}}
\def\K{\ensuremath{\mathcal K}}
\def\N{\ensuremath{\mathcal N}}
\def\NN{\ensuremath{\mathbbm{N}}}
\def\cP{\ensuremath{\mathcal P}}
\def\R{\ensuremath{\mathbbm{R}}}
\def\W{\ensuremath{\mathcal W}}
\def\conv{\ensuremath{\mathop{\rm \longrightarrow}}}
\def\argmax{\mathop{\arg\!\max}\limits}
\newcommand{\eps}{\ensuremath{{\varepsilon }}}
\renewcommand{\phi}{\ensuremath{{\varphi}}}
\newcommand{\Gn}[1]{\ensuremath{\mathbbm{G}_{#1}}}
\newcommand{\Hn}[1]{\ensuremath{\mathbbm{H}_{#1}}}
\newcommand{\FMS}{\ensuremath{\hat F_n^{MS}}}
\newcommand{\fMS}{\ensuremath{\hat f_n^{MS}}}
\newcommand{\hMS}{\ensuremath{\hat h_n^{MS}}}
\newcommand{\FMSX}{\ensuremath{\hat F_{n,X}^{MS}}}
\newcommand{\Fpi}[1]{\ensuremath{\hat F_n^{(#1)}}}
\newcommand{\hf}{\ensuremath{h_{f_0}}}
\newcommand{\hn}{\ensuremath{\hat h_n}}
\newcommand{\fn}{\ensuremath{\bar f_{n,0}}}
\newcommand{\hfn}{\ensuremath{h_{\fn}}}
\newcommand{\hi}{\ensuremath{\hat h_i}}
\newcommand{\hij}{\ensuremath{\hat h_{i,j}}}
\newcommand{\ai}{\ensuremath{\alpha_i}}
\newcommand{\bij}{\ensuremath{\beta_{i,j}}}
\newcommand{\dn}{\ensuremath{\delta_n}}
\newcommand{\en}{\ensuremath{\eps_n}}
\newcommand{\be}{\begin{eqnarray*}}
\newcommand{\ee}{\end{eqnarray*}}
\newcommand{\bef}{\begin{eqnarray}}
\newcommand{\eef}{\end{eqnarray}}
\newcommand{\norm}[2]{\mbox{$\|#1\|_{#2}$}}
\newcommand{\norminfty}[1]{\mbox{$\norm{#1}{\infty}$}}
\newcommand{\Dconv}{\ensuremath{\leadsto}}
\newcommand{\Pconv}{\ensuremath{\stackrel{\cP}{\conv}}}
\newtheorem{theorem}{Theorem}[section]
\newtheorem{lemma}[theorem]{Lemma}
\newtheorem{remark}[theorem]{Remark}
\newenvironment{proof}{\noindent\textbf{Proof:}}{$\hfill\square$\vspace{3mm}}
\newenvironment{proof2}[1]{\noindent\textbf{Proof of #1:}}{$\hfill\square$\vspace{3mm}}
\numberwithin{equation}{section}
\numberwithin{figure}{section}
\begin{document}

\maketitle
\vspace{30mm}
\begin{tabular}{lcl}
Address: & & \\[1ex]
Piet Groeneboom \& Geurt Jongbloed & & Birgit I. Witte (Corresponding author)\\
Delft University of Technology & & VU University Medical Center\\
Delft Institute of Applied Mathematics & & Department of Epidemiology and Biostatistics\\
Mekelweg 4 & & PO Box 7057\\
2628 CD Delft & & 1007 MB Amsterdam\\
The Netherlands & & The Netherlands\\
 & & {\tt B.Witte@vumc.nl}
\end{tabular}

\newpage
\begin{abstract}
We consider the problem of estimating the joint distribution function of the event time and a continuous mark variable based on censored data. More specifically, the event time is subject to current status censoring and the continuous mark is only observed in case inspection takes place after the event time. The nonparametric maximum likelihood estimator (MLE) in this model is known to be inconsistent. We propose and study an alternative likelihood based estimator, maximizing a smoothed log-likelihood, hence called a maximum smoothed likelihood estimator (MSLE). This estimator is shown to be well defined and consistent, and a simple algorithm is described that can be used to compute it. The MSLE is compared with other estimators in a small simulation study.
\end{abstract}

\vspace{5mm}
\noindent{\bf Keywords}: bivariate distribution function, censored data, pointwise consistency, histogram estimator, Kullback-Leibler divergence
\vspace{5mm}

\noindent{\bf AMS Subject Classification}: 62G05, 62G20, 62N02, 62H12.
\vspace{5mm}

\section{Introduction}
In survival analysis one is interested in the distribution of the time $X$ it takes before a certain event (failure, onset of a disease) takes place. Typically,  the variable $X$ is not observed completely, due to some sort of censoring. Depending on the censoring mechanism and the precise assumptions imposed on the distribution function $F_0$ of $X$, many estimators have been defined and studied in the literature.

In the context of case I interval censoring, \citeN{groeneboom:92} study the (nonparametric) maximum likelihood estimator (MLE). It maximizes the likelihood of the observed data over all distribution functions, without any additional constraints. In case $X$ is subject to right-censoring, the MLE is the Kaplan-Meier estimator (\citeN{kaplan:58}). In these models, the resulting estimators are piecewise constant between jumps, and therefore fail to have a density w.r.t.\ Lebesgue measure.

If the quantity of interest is bivariate, $(X,Y)$, with joint distribution function $F_0$, the situation is more complicated. If both components $X$ and $Y$ of the pair $(X,Y)$ are subject to right censoring, the MLE is inconsistent, see \shortciteN{tsai:86}, and modifications of the MLE to ensure consistency have been discussed by several authors, see, e.g., \citeN{dabrowska:88} and \citeN{laan:96aos}. In case both $X$ and $Y$ are subject to interval censoring, the MLE is consistent, see, e.g., \citeN{song:01}. In computing the MLE, one first has to determine the set of points where the MLE can have mass (which is different from the set of observations). \citeN{maathuis:05} provides two reduction algorithms that can be used to determine this set. The reason that the MLE is consistent for the bivariate current status model, and inconsistent for the bivariate right censoring model (where one has in fact more information), is that in the latter case the MLE only uses the information on ``lines", if the observation is uncensored in one coordinate, and does not use the surrounding information for the uncensored coordinate.
One would need information on the conditional distribution on these lines to distribute mass in such a way that a consistent estimate would result, but this conditional distribution is not available, since it is part of the estimation problem. In the current status model or the interval censoring model with more observation times for the ``hidden variable", one only has information on the interval to which the hidden variable belongs, and the MLE therefore automatically uses the surrounding information. For this reason a reduction of the bivariate right-censoring model to the interval censoring model has been proposed to obtain consistent estimators of the bivariate distribution function: in this way the information of a whole set of lines is combined.

An interesting situation arises when $X$ is a survival time and $Y$ a (continuous) mark variable. In case $X$ is subject to right-censoring and $Y$ is only observed if $X$ is observed, \citeN{huang:98} study a nonparametric estimator of the bivariate distribution function $F_0$. This estimator is uniformly strongly consistent and asymptotically normally distributed. \shortciteN{hudgens:07} study several estimators for the joint distribution of a survival time and a continuous mark variable, when the survival time is interval censored and the mark variable is possibly missing. In this paper a computational algorithm for the MLE is proposed, but since the MLE is inconsistent (\citeN{maathuis:08}), one would be inclined to recommend not to use this estimator.

The model we focus on in this paper, the current status continuous mark (CSCM) model, is a special case of the model studied in the latter paper, since the $X$ is subject to current status censoring, the simplest case of interval censoring, and $Y$ is only observed if the event time was before the censoring time. More precisely, instead of observing $(X,Y)$, we observe a variable $T$, independent of $(X,Y)$, as well as the variable $\Delta=1_{\{X\leq T\}}$. In case the variable $X$ is smaller than or equal to $T$, i.e.\ $\Delta=1$, we also observe the variable $Y$, in case $\Delta=0$ we do not. We denote the (bivariate) distribution function of $(X,Y)$ by $F_0$ and assume it to have a density $f_0$ w.r.t.\ Lebesgue measure. Because  $P(Y=0)=0$ under $F_0$, we can represent the observable information on $(X,Y)$ in the vector $W = (T,\Delta\cdot Y)$.

An application where observations can be modeled by this model is the HIV vaccine trial studied by \shortciteN{hudgens:07}. In these HIV vaccine trials, participants are injected with a vaccine and tested for infection with HIV during several follow-ups. Efficacy of the vaccine might depend on the genetic sequence of the exposing virus, and the so-called viral distance $Y$ between the DNA of the infecting virus and the virus in the vaccine could be considered as a continuous mark variable. In general, the time $X$ to HIV infection is subject to interval censoring case $k$, with current status censoring (or interval censoring case 1) as a special instance.

The MLE in the CSCM model is inconsistent and \citeN{maathuis:08} obtain a consistent estimator by discretizing the mark variable to $K$ levels. The resulting observations can then be viewed as observations from the current status $K$-competing risk model. Apart from consistency, global and local asymptotic distribution properties for the MLE in the latter model are proved in \shortciteANP{groeneboom:08} \citeyear{groeneboom:08,groeneboom:08b}. Asymptotic results for $K\to\infty$ as $n\to\infty$ are not yet known. Another approach to obtain a consistent estimator is adopted in \shortciteN{witte:11spie}. There a plug-in inverse estimator for the bivariate distribution function $F_0$, using kernel estimators, is studied and its asymptotic distribution is derived. In contrast to the proposed approach of \citeN{maathuis:08}, this estimator does have a Lebesgue density on $[0,\infty)^2$. Unfortunately, for finite sample size $n$, this estimator does not necessarily satisfy the conditions of a bivariate distribution function (i.e.\ each rectangle has nonnegative mass). If estimators are to be used in bootstrap experiments, this is a serious drawback, since it is not clear how to interpret sampling from such a ``distribution".

In this paper we consider an alternative method, the method of maximum smoothed likelihood. This is a natural approach since also in other models where MLE's are inconsistent (\citeN{Jongbloed09}) or nonsmooth (\shortciteN{GroJoWi10}), maximum smoothed likelihood estimators (MSLEs) provide consistent and smooth estimators. The basic idea is to replace the empirical distribution function in the log-likelihood  by a smooth estimator. We prove that for a histogram-type smoothing of the observation distribution the resulting MSLE is consistent under certain conditions. Contrary to the plug-in inverse estimator studied by \shortciteN{witte:11spie}, the MSLE is a real distribution function.

The outline of this paper is as follows. In section \ref{sec:model} we introduce the CSCM model in more detail and define the MSLEs \FMS\ and \fMS\ for the distribution function $F_0$ and its density.  In section \ref{sec:cons}, consistency of the bivariate estimator \FMS\ and the marginal estimator \FMSX\ for the distribution function of $X$ are proved. A comparative simulation study is presented in section \ref{sec:discussion}.  Technical proofs and lemmas are given in appendix \ref{app:technlem}, and in Appendix \ref{sec:EM} we also give a desription of an easy to implement EM algorithm for computing the MSLE.

\section{Model description and definition of the estimator}\label{sec:model}
In this section we describe the current status continuous mark model in more detail and define the maximum smoothed likelihood estimator (MSLE) \fMS\ for the bivariate density $f_0$. The smoothed log-likelihood is obtained by replacing the empirical distribution function in the definition of the log-likelihood by a smooth estimator $\hat H_n$ for the distribution function. We prove that for piecewise constant density estimates $\hn$ the estimator \fMS\ exists and is unique under certain conditions. Based on \fMS\ we also define the MSLE for the bivariate distribution function $F_0$ and for the marginal distribution function $F_{0,X}$ of $X$.

Consider an i.i.d.\ sequence $(X_1,Y_1), (X_2,Y_2), \ldots$ with bivariate distribution function $F_0$ on $\W = [0,\infty)\times[0,\infty)$ and independent of this an i.i.d.\ sequence $T_1, T_2, \ldots$ with distribution function $G$ and Lebesgue density $g$ on $[0,\infty)$. Based on these sequences, define $\Delta_i = 1_{\{X_i\leq T_i\}}$ and $W_i = (T_i,\Delta_i\cdot Y_i) =: (T_i,Z_i)$, where we assume that $P(Y_i=0)=0$. In words: if the event already occurred before time $T_i$, the mark variable $Y_i$ is observed; if not, $Y_i$ is not observed. Note that $\Delta_i=1_{\{Z_i>0\}}$.

Let $F_{0,X}(t) = \int_0^t\int_0^{\infty} f_0(u,v)\,dv\,du$ be the marginal distribution function of $X$ and define $\partial_2 F_0(t,z) = \frac{\partial}{\partial z}F_0(t,z) = \int_0^t f_0(u,z)\,du$. Then $W_1, W_2, \ldots$ are i.i.d.\ and have density
\be \hf(t,z) = 1_{\{z>0\}}(z) g(t)\partial_2 F_0(t,z) + 1_{\{z=0\}}(z)g(t)\big(1-F_{0,X}(t)\big) =: 1_{\{z>0\}}(z) h_1(t,z) + 1_{\{z=0\}}(z)h_0(t)\nonumber,\ee
with respect to the  measure $\lambda$ on $[0,\infty)^2$ defined below. Let $\lambda_i$ be Lebesgue-measure on $\R^i$, $\B$ the Borel $\sigma$-algebra on $[0,\infty)^2$, then the measure $\lambda$ is defined by
\bef\label{def:lambda}
\lambda\big(B\big) = \lambda_2\big(B\big) + \lambda_1\big(\{x\in[0,\infty): (x,0)\in B\}\big), \ B\in\B.
\eef

Let $\Hn{n}$ be the empirical distribution function of $W_1, \ldots, W_n$. \shortciteN{hudgens:07} define and characterize the nonparametric maximum likelihood estimator (MLE), which maximizes
\begin{align}
\label{ll:cm}
l(f) &= \int \log h_f(t,z)\,d\Hn{n}(t,z)\nonumber\\
&= \int 1_{\{z>0\}}(z)\log\big(\partial_2 F(t,z)\big) + 1_{\{z=0\}}(z)\log\big(1-F_X(t)\big)\,d\Hn{n}(t,z)
\end{align}
over the class of distribution functions with density $f$ w.r.t.\ $\lambda_1$ $\times$ counting measure on the observed marks, with appropriate interpretation of the partial derivative. However, \citeN{maathuis:08} prove that this MLE is inconsistent. The heart of the consistency difficulties with the MLE resides in the first part of the log-likelihood
$\int 1_{\{z>0\}}(z)\log\partial_2 F(t,z)\,d\Hn{n}(t,z)$,
where one really has to deal with a density type expression in $z$ (i.e., $\partial_2 F(t,z)=\int_0^t f(s,z)\,ds$) instead of a bivariate distribution function.

We now propose an alternative likelihood-based method, the method of maximum smoothed log-likelihood introduced in \citeN{eggermont:01}, where the resulting estimator for $F_0$ will turn out to be consistent. Let $\hat H_n$ be a smoothed version of the empirical distribution function $\Hn{n}$, then the smoothed log-likelihood $l^S$ is defined by replacing $\Hn{n}$ in (\ref{ll:cm}) by its smoothed version $\hat H_n$, i.e.
\be
l^S(f) = \int 1_{\{z>0\}}(z)\log\big(\partial_2 F_0(t,z)\big) + 1_{\{z=0\}}(z)\log\big(1-F_{0,X}(t)\big),d\hat H_n(t,z),
\ee
Note that the factorization property of the MLE in the current status model, by which the part involving $g$ drops out, also holds in the present case: we do not have to maximize over the unknown $g$, because it does not play a role in the maximization problem. The maximum smoothed likelihood estimator (MSLE) \fMS\ for the density $f_0$ is then defined as
\bef\label{def:fMS_cscm}
\fMS = \argmax_{f\in\F}l^S(f),
\eef
where \F\ is the class of all distribution functions with density $f$ w.r.t.\ Lebesgue measure on $[0,\infty)^2$. The MSLE for the bivariate distribution function $F_0$ is naturally defined as
\be\FMS(t,z) = \int_0^t \int_0^z \fMS(u,v)\,dv\,du,\ee
and the estimators for $F_{0,X}$ and $\partial_2 F_0$ are defined similarly,
\be \FMSX(t) = \int_0^t\int_0^{\infty} \fMS(u,v)\,dv\,du ,\qquad\partial_2 \FMS(t,z) = \frac{\partial}{\partial z} \FMS(t,z) = \int_0^t \fMS(u,z)\,du.\ee

Note that the MSLE \fMS\ can also be seen as a Kullback-Leibler projection, minimizing
\be \K\big(\hn,h_f\big) = \int \hn(t,z)\log\frac{\hn(t,z)}{h_f(t,z)}\,d\lambda(t,z) = \int \log\hn(t,z)\,d\hat H_n(t,z) - \int \log h_f(t,z)\,d\hat H_n(t,z),\ee
over densities $f\in\F$. This follows from the fact that the first term on the right-hand side does not depend on $f$ and the second term on the right-hand side equals $-l^S(f)$.

In this paper, for the ease of computations and proving consistency, we take a histogram-type estimator for the density $\hn$ of $\hat H_n$, resulting in a piecewise linear estimator $\hat H_n$. There are other possibilities as well to choose $\hat H_n$, see section \ref{sec:discussion}.  To define our estimator $\hn$ we take two binwidths $\dn$ and $\en$ and define $A_{n,i}=((i-1)\dn,i\dn]=(a_{n,i-1},a_{n,i}]$ and $B_{n,j}=((j-1)\en,j\en]=(b_{n,j-1},b_{n,j}]$ for $i=1,\ldots, k_n$, $j=1,\ldots, l_n$. Then slightly abusing notation, the estimator $\hn$ is defined and denoted by
\be\lefteqn{\hn(t,0) = \hi = \dn^{-1}\Hn{n}\big(A_{n,i}\times\{0\}\big), \ \text{if}\ t\in A_{n,i}}\\
\lefteqn{\hn(t,z) = \hij = \dn^{-1}\en^{-1}\Hn{n}\big(A_{n,i}\times B_{n,j}\big), \ \text{if}\ (t,z)\in A_{n,i}\times B_{n,j}.}\ee

We consider the estimator \fMS\ that is obtained by maximizing $l^S(f)$ over the class $\F_n$ of piecewise constant densities with cells $A_{n,i}\times B_{n,j}$. For the resulting histogram-type estimator \fMS\ we can prove that it is well defined and unique if all cells contain at least one observation. This holds with high probability under certain conditions on the observation density \hf\ and the total number $k_n\cdot l_n$ of cells $A_{n,i}\times B_{n,j}$.

Before proving the existence and uniqueness of \fMS, stated in Theorem \ref{th:exis_uniq_cm} below, we introduce some notation to relate the class of densities we consider to appropriate subsets of Euclidean space.
\bef
\lefteqn{\B_n = \big\{f\in \R^{k_n\times l_n}\ :\ 0\leq f_{i,j} \leq (\dn\en)^{-1} \ \forall i,\ j \big\}}\\
\lefteqn{\ai(f) = \dn\en \sum_{l=i}^{k_n}\sum_{j=1}^{l_n}f_{l,j},\qquad \bij(f) = \dn\sum_{l=1}^{i}f_{l,j}, \ \text{for}\ f\in\B_n}\label{def:alpha_beta}
\eef
with $0\log0 := 0$, $\alpha_{k_n+1}(f) = 0$ and $\beta_{0,j}(f)=0$ for all $j$.

\begin{lemma}\label{lem:rewr_lS_cm}
Define
\begin{align}
\psi_{-1}(f) =&\dn\sum_{i=1}^{k_n} \hi\ \phi\big(\alpha_{i+1}(f),\ai(f)\big)+ \dn\en\sum_{i=1}^{k_n}\sum_{j=1}^{l_n} \hij\ \phi\big(\bij(f),\beta_{i-1,j}(f)\big)\nonumber\\
&- \dn\en\sum_{i=1}^{k_n}\sum_{j=1}^{l_n}f_{i,j} + 1,\nonumber\\
\phi(x,y) =& \left\{\begin{array}{ll}
(x\log x-y\log y)/(x-y), & x,y\in[0,1],\ x\not=y\\
1+\log x, & x,y\in[0,1],\ x=y,\end{array}\right.\label{def:phixy}
\end{align}
then
\be \argmax_{f\in\F_n}l^S(f) = \argmax_{f\in\B_n}\psi_{-1}(f).\ee
\end{lemma}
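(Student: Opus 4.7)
The plan has three steps: (i) derive a closed-form expression for $l^S(f)$ when $f\in\F_n$ in terms of the histogram values $\hi,\hij$ and the partial sums $\ai(f),\bij(f)$; (ii) observe that this expression differs from $\psi_{-1}(f)$ only by an additive constant on $\F_n$; and (iii) show via a scaling/homogeneity argument that the extra term $-\dn\en\sum f_{i,j}+1$ built into $\psi_{-1}$ forces every maximizer over $\B_n$ to actually lie in $\F_n$. Step (iii) is the only nontrivial point; steps (i)--(ii) are explicit integration.

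For step (i), for every $f\in\F_n$ both $t\mapsto\partial_2 F(t,z)$ (for fixed $z\in B_{n,j}$) and $t\mapsto 1-F_X(t)$ are affine on $A_{n,i}$: $\partial_2 F(t,z)$ interpolates linearly between $\beta_{i-1,j}(f)$ at $t=a_{n,i-1}$ and $\beta_{i,j}(f)$ at $t=a_{n,i}$, while $1-F_X(t)$ goes linearly from $\ai(f)$ to $\alpha_{i+1}(f)$ on the same interval (the latter uses the density normalization $\dn\en\sum f_{i,j}=1$, so that $1-F_X(a_{n,i-1})=\ai(f)$). A linear change of variable combined with $\int\log u\,du=u\log u-u$ reduces the integral of the log of any affine function of $t$ over a bin of width $\dn$ to $\dn(\phi(r,\ell)-1)$, with $\phi$ as in (\ref{def:phixy}) and $r,\ell$ the right- and left-endpoint values; the convention $\phi(x,x)=1+\log x$ handles the zero-slope (constant) case. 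Summing contributions over all cells and using $\dn\sum_i\hi+\dn\en\sum_{i,j}\hij=\hat H_n\bigl([0,\infty)^2\bigr)=1$ to absorb the resulting $-1$'s yields
\[
l^S(f)=\dn\sum_i\hi\,\phi\bigl(\alpha_{i+1}(f),\ai(f)\bigr)+\dn\en\sum_{i,j}\hij\,\phi\bigl(\beta_{i,j}(f),\beta_{i-1,j}(f)\bigr)-1,
\]
so that on $\F_n$ (where $\dn\en\sum f_{i,j}=1$) the identity $l^S(f)=\psi_{-1}(f)-1$ holds, and in particular $\argmax_{\F_n}l^S=\argmax_{\F_n}\psi_{-1}$.

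The main obstacle is to argue $\argmax_{\B_n}\psi_{-1}\subseteq\F_n$. Using the homogeneity $\phi(\lambda x,\lambda y)=\log\lambda+\phi(x,y)$ for $\lambda>0$ and the linearity of $f\mapsto\ai(f),\beta_{i,j}(f)$, a direct calculation gives, for any $f\in\B_n$ with $a:=\dn\en\sum_{i,j}f_{i,j}>0$,
\[
\psi_{-1}(\lambda f)=\log\lambda+\psi_{-1}(f)-(\lambda-1)a\qquad\text{for all }\lambda>0\text{ with }\lambda f\in\B_n.
\]
Taking $\lambda^{\ast}:=1/a$ yields a rescaled candidate $\lambda^{\ast}f\in\F_n$ that also lies in $\B_n$, because $\lambda^{\ast}f_{i,j}=f_{i,j}/a\leq 1/(\dn\en)$ follows from the trivial inequality $f_{i,j}\leq\sum_{l,m}f_{l,m}$. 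The resulting improvement $\psi_{-1}(\lambda^{\ast}f)-\psi_{-1}(f)=a-1-\log a\geq 0$ vanishes only when $a=1$, so no $f\in\B_n\setminus\F_n$ with $a>0$ can be a maximizer; moreover the degenerate point $f\equiv 0$ gives $\psi_{-1}(0)=-\infty$ (since $\phi(0,0)=-\infty$ and at least one of the $\hi,\hij$ is positive), so it is not a maximizer either. Combining, $\argmax_{\B_n}\psi_{-1}=\argmax_{\F_n}\psi_{-1}=\argmax_{\F_n}l^S$, as claimed.
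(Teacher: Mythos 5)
Your proof is correct, and steps (i)--(ii) coincide with the paper's computation: the same exact integration of the logarithm of an affine function over each cell via $\int\log u\,du=u\log u-u$, leading to the same closed form $l^S(f)=\psi_{-1}(f)-1$ on $\F_n$. Where you genuinely diverge is step (iii), the argument that maximizing $\psi_{-1}$ over the larger set automatically enforces the normalization $\dn\en\sum_{i,j}f_{i,j}=1$. The paper introduces a Lagrange multiplier, maximizes $\psi_{\lambda}$ over the nonnegative orthant $\cP_n$, and reads off from the stationarity of $\gamma\mapsto\psi_{\lambda}\big((1+\gamma)\hat f_{\lambda}\big)$ at $\gamma=0$ that the choice $\lambda=-1$ forces the maximizer to have total mass one. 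You instead use the log-homogeneity $\phi(\lambda x,\lambda y)=\log\lambda+\phi(x,y)$ to obtain the global identity $\psi_{-1}(\lambda f)=\psi_{-1}(f)+\log\lambda-(\lambda-1)a$ and conclude from $a-1-\log a\geq 0$ that rescaling to mass one never decreases $\psi_{-1}$ and strictly increases it unless $a=1$. Both arguments rest on the same underlying fact (the behaviour of $\psi$ along rays, and both implicitly need the histogram cells to carry total $\Hn{n}$-mass one so that the coefficient of $\log\lambda$ is exactly $1$), but yours buys some rigour: it is a global inequality rather than a first-order condition at a maximizer, so it does not presuppose that a maximizer over the unbounded set $\cP_n$ exists or is differentiable along the ray, and it explicitly checks that the rescaled point remains in $\B_n$ and that $f\equiv 0$ is excluded. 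The only caveat is that your homogeneity identity must be read in the extended sense on cells where both arguments of $\phi$ vanish; such $f$ give $\psi_{-1}(f)=-\infty$ whenever the corresponding $\hi$ or $\hij$ is positive and are thus never maximizers, so nothing is lost.
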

The proof is given in the Appendix.

Using this lemma, we can prove the existence and uniqueness of \fMS\ theorem below.

\begin{theorem}\label{th:exis_uniq_cm}
If $\hat h_i>0$ and $\hat h_{i,j}>0$ for all $i,j$, then the maximizer \fMS\ of $l^S$ over $\F_n$ exists and is unique.
\end{theorem}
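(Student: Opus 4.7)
By Lemma~\ref{lem:rewr_lS_cm} it suffices to show that $\psi_{-1}$ attains a unique maximum over the compact convex set $\B_n\subset\R^{k_n\times l_n}$, so the plan is to establish existence via compactness and then uniqueness via strict concavity. For existence, I would extend $\phi$ to an upper semi-continuous extended-real-valued function on the closed first quadrant (with the convention $0\log 0:=0$ and $\phi(0,0):=-\infty$); then $\psi_{-1}:\B_n\to[-\infty,\infty)$ is upper semi-continuous and bounded above, because $\phi$ is bounded above on any bounded subset of $[0,\infty)^2$. A finite lower bound for $\sup\psi_{-1}$ is given by the uniform choice $f_{i,j}\equiv(\dn\en k_n l_n)^{-1}$, so a maximizer $\fMS$ exists. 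Moreover, since every $\hi$ and every $\hij$ is strictly positive by assumption, any maximizer must satisfy $\ai(\fMS)>0$ for $i\le k_n$ and $\bij(\fMS)>0$ for all $i,j$ in the sums, since otherwise the corresponding $\phi$-term is $-\infty$; hence $\fMS$ lies in the open region where every argument of every $\phi$ is strictly positive.

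For uniqueness, the key ingredient is the integral representation
\[
\phi(x,y)\;=\;\frac{1}{x-y}\int_y^x(1+\log t)\,dt\;=\;1+\int_0^1\log\bigl(sx+(1-s)y\bigr)\,ds,
\]
obtained from $x\log x-y\log y=\int_y^x(1+\log t)\,dt$ via the substitution $t=(1-s)y+sx$. Since $\log$ is strictly concave on $(0,\infty)$, this exhibits $\phi$ as strictly concave on the open quadrant $\{(x,y):x,y>0\}$. Now if $f^{(1)}\ne f^{(2)}$ were two maximizers, I would pick $(i_0,j_0)$ with $f^{(1)}_{i_0,j_0}\ne f^{(2)}_{i_0,j_0}$; since $\beta_{i_0,j_0}(f)-\beta_{i_0-1,j_0}(f)=\dn f_{i_0,j_0}$, the pair $(\beta_{i_0,j_0}(f),\beta_{i_0-1,j_0}(f))$ differs between $f^{(1)}$ and $f^{(2)}$, and by the positivity already secured the whole segment connecting these two pairs lies inside $(0,\infty)^2$. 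Strict concavity of $\phi$ on that segment, together with ordinary concavity of every other $\phi$-term and linearity of $-\dn\en\sum_{i,j}f_{i,j}+1$, then gives $\psi_{-1}\bigl((f^{(1)}+f^{(2)})/2\bigr)>\tfrac12\psi_{-1}(f^{(1)})+\tfrac12\psi_{-1}(f^{(2)})$, contradicting maximality.

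The main obstacle is bookkeeping rather than conceptual: one has to ensure carefully that $\psi_{-1}$ is upper semi-continuous at the boundary points of $\B_n$ where some $\phi$-term tends to $-\infty$ (so that compactness actually delivers a maximizer), and that the pair $(\beta_{i_0,j_0},\beta_{i_0-1,j_0})$ identified above really lies in the open strict-concavity region of $\phi$ for both candidate maximizers. The first point is handled by the extended-real-valued convention, and the second is exactly the positivity observation forced by $\hi,\hij>0$; once both are in place, the concavity-plus-strict-concavity argument closes the proof.
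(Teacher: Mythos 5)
Your proof follows the same route as the paper's --- existence from compactness of $\B_n$, uniqueness from strict concavity of the objective --- but fills in the details that the paper's three-line proof leaves implicit, namely the upper semi-continuity needed at boundary points of $\B_n$ where arguments of $\phi$ vanish, and an actual verification of strict concavity via the representation $\phi(x,y)=1+\int_0^1\log\bigl(sx+(1-s)y\bigr)\,ds$. One small slip in the uniqueness step: if the two candidate maximizers differ only through some $f_{1,j_0}$, the distinguishing pair is $\bigl(\beta_{1,j_0}(f),\beta_{0,j_0}(f)\bigr)=(\dn f_{1,j_0},0)$, which does \emph{not} lie in the open quadrant since $\beta_{0,j}\equiv 0$ by definition; the argument still closes because your integral representation, and hence strict concavity of $\phi$, extends to all of $[0,\infty)^2\setminus\{(0,0)\}$ (in particular $\phi(x,0)=\log x$ is strictly concave), but the claim that the whole connecting segment lies inside $(0,\infty)^2$ should be weakened accordingly.
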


\begin{proof}
The function $\phi$ is continuous on $(0,1]^2$ so $\psi_{-1}$ is continuous on the compact set $\B_n$. 
Hence $\psi_{-1}$ attains its maximum over $\B_n$ and \fMS\ exists. Uniqueness of \fMS\ follows from the strict concavity of $l^S(f)$ on its domain.
\end{proof}

\begin{remark}\label{rm:pos_hnh}\rm
If $\hat h_i=0$ for some $i$ or $\hat h_{i,j}=0$ for some $i$ and $j$, we can construct examples where \fMS\ is not unique. However, we can prove under various conditions on \hf, $k_n$ and $l_n$ that
\bef\label{P:pos_hnh}
P\big(\exists\ i\ :\ \hat h_i =0\big) \conv 0, \qquad P\big(\exists\ i,j\ :\ \hat h_{i,j} =0\big) \conv 0,
\eef
so that with probability converging to one \fMS\ is well defined for $n$ sufficiently large. For example, if $k_n=l_n$ and for some $c>0$
\bef\label{cond:h}
\int_{A_{n,i}} h_0(t)\,dt \geq \frac{c}{k_n^2}, \qquad \int_{A_{n,i}\times B_{n,j}} h_1(t,z)\,dt\,dz \geq \frac{c}{k_n^3},
\eef
for all $i$ and $j$, then
\begin{align*} P\big(\exists\ i,j\ :\ \hat h_{i,j} =0\big) &\leq \sum_{i,j}P\big(\hat h_{i,j} = 0\big) \leq k_n^2 \left(1-\frac{c}{k_n^3}\right)^n = k_n^2 \left[\left(1-\frac{c}{k_n^3}\right)^{k_n^3}\right]^{n/k_n^3}\\
&\approx k_n^2e^{-cn/k_n^3} \conv 0,\end{align*}
if $n^{-1}k_n^3\log k_n\to 0$.\\
A similar argument shows that also $P\big(\exists\ i\ :\ \hat h_i =0\big) \leq k_n \left(1-\frac{c}{k_n^2}\right)^n \conv 0$.\\
Assume for example that $f_0$ has compact support $\W_M = [0,M_1]\times[0,M_2]$ for some constants $0<M_1,M_2<\infty$ and stays away from zero on its support, and that $g\geq\kappa_1>0$ on $[0,M_1]$. Then (\ref{cond:h}) is satisfied. This condition is far from necessary, but only meant as an illustration for a condition under which (\ref{cond:h}) is satisfied.
\end{remark}

By Theorem \ref{th:exis_uniq_cm} we know conditions under which the estimator \fMS\ defined in (\ref{def:fMS_cscm}) exists and is unique. A simple EM algorithm for computing the MSLE is given in the appendix. 

\begin{figure}[hp]
\centering
\subfigure{
 \includegraphics[width=0.45\textwidth]{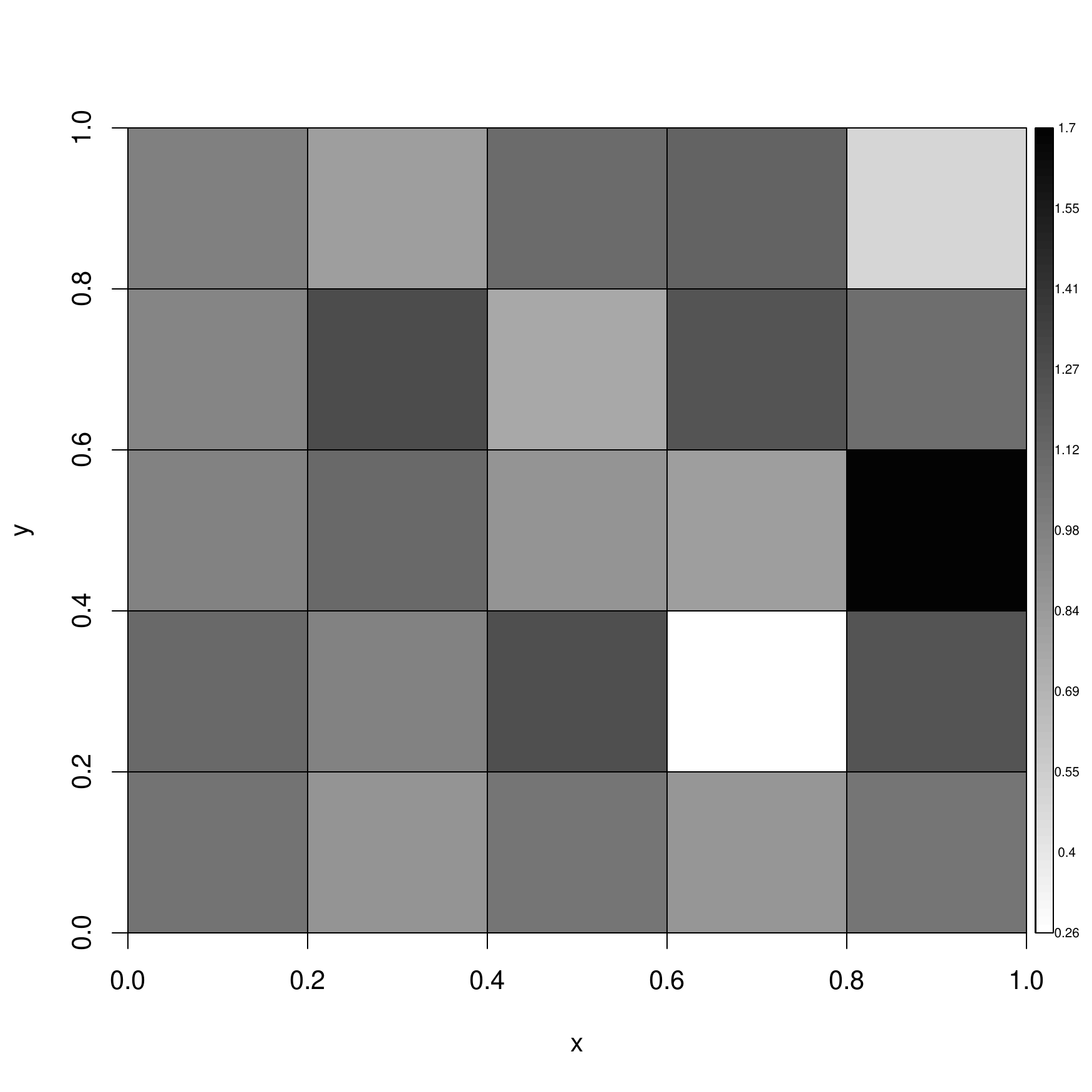} }
\subfigure{
 \includegraphics[width=0.45\textwidth]{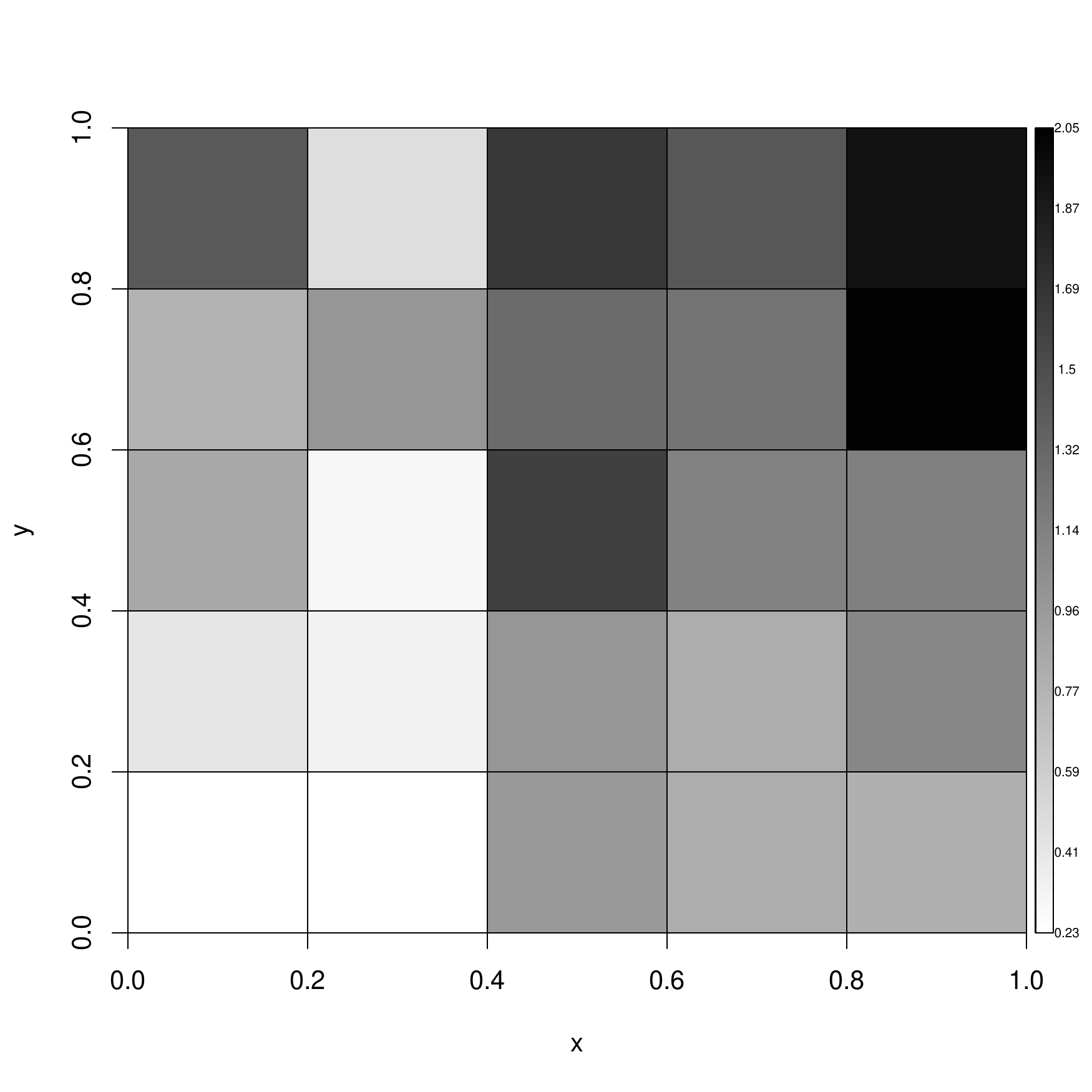} }
\subfigure{
 \includegraphics[width=0.45\textwidth]{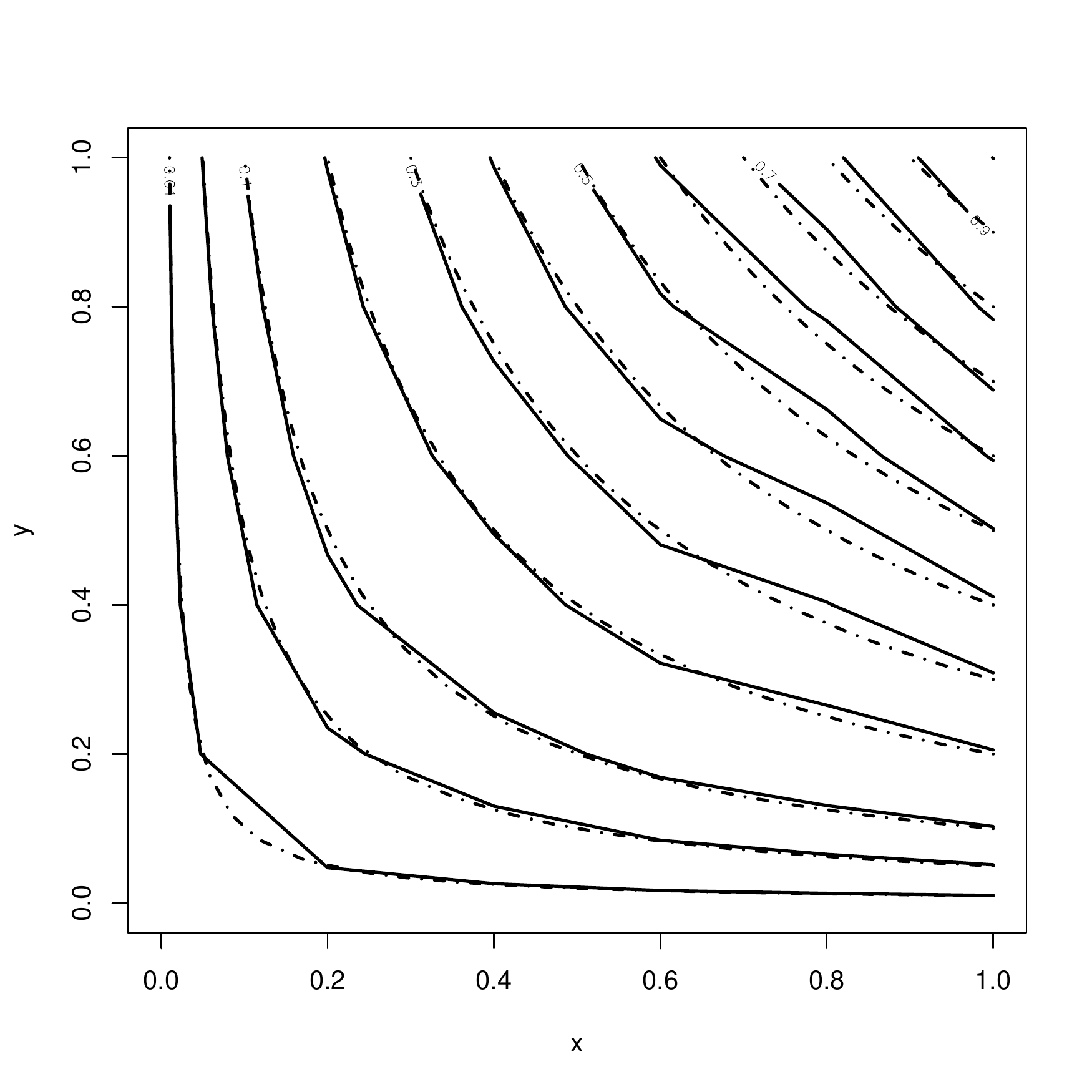} }
\subfigure{
 \includegraphics[width=0.45\textwidth]{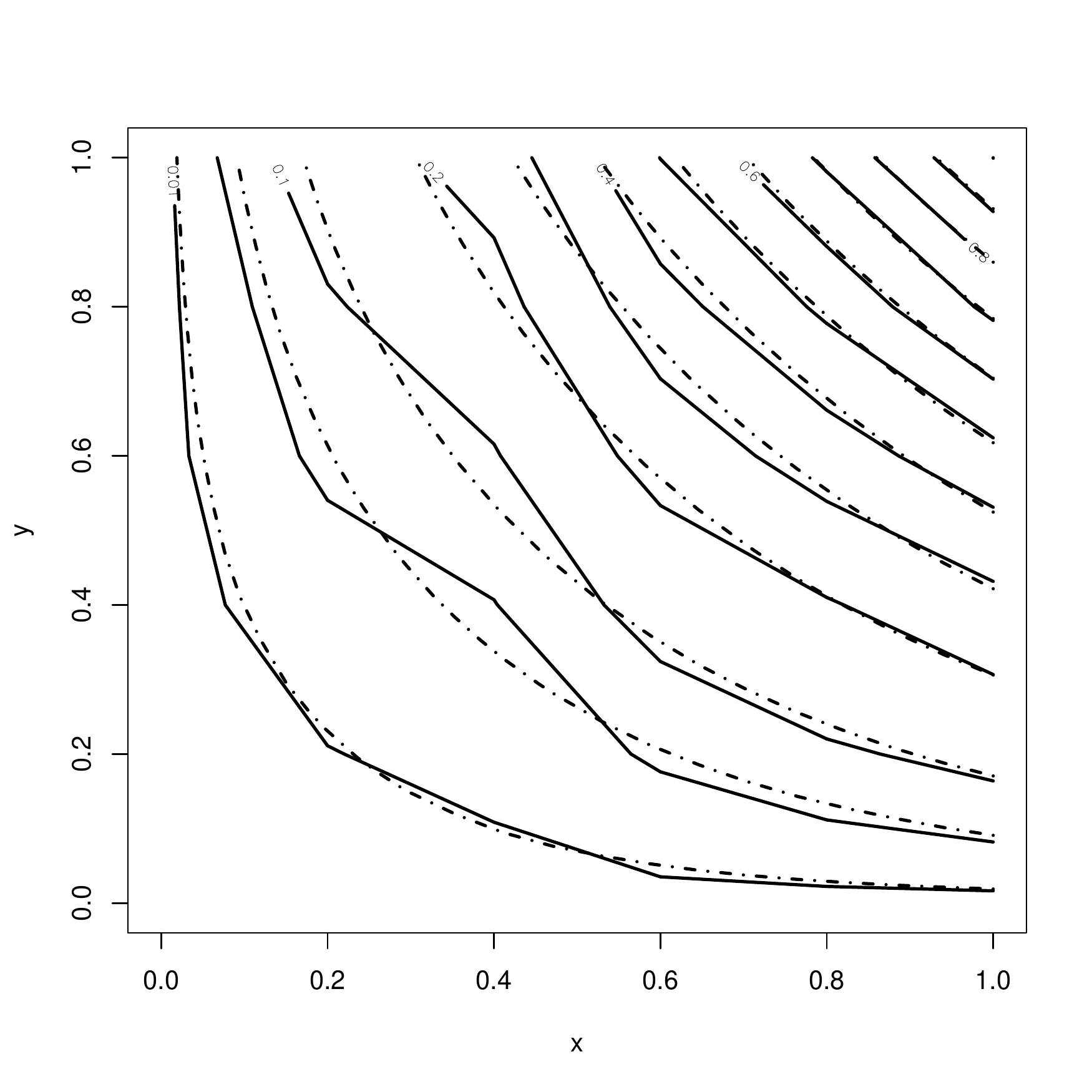} }
\caption{The estimator \fMS\ (upper panels) and contour plot of \FMS\ (lower panels) for two simulations: $f_0(x,y)=1_{[0,1]^2}(x,y)$, $g(t)=1_{[0,1]}(t)$ (left panels) and $f_0(x,y)=x+y$, $g(t) = 2t$ (right panels), $n=5\,000$ and $\dn=\en=0.2$ chosen as illustration. The dash-dotted lines are the true distribution functions $F_0$ for these two examples. The levels of the contour plot are $0.01, 0.05, 0.1, 0.2, \ldots, 1.0$}\label{fig:ex_est}
\end{figure}

\section{Consistency of \FMS\ }\label{sec:cons}
In this section, we prove that $\hMS=h_{\fMS}$ and \FMS\ are consistent estimators for the density $\hf$ of the observable vector $W$ and the bivariate distribution function of interest $F_0$, respectively. To prove this we assume the densities $f_0$ and $g$ to satisfy conditions $(F.1)$ and $(G.1)$ below. We also assume $f_0$ and $g$ are such that \hf\ satisfies the conditions needed for (\ref{P:pos_hnh}) so that existence and uniqueness of \fMS\ are guaranteed with probability converging to one. Furthermore, we assume the binwidths $\dn$ and $\en$ to satisfy condition $(C.1)$ below.
\begin{itemize}
\item[$(F.1)$] The density $f_0$ has compact support $\W_M=[0,M_1]\times[0,M_2]$ and is continuous on $\W_M$.
\item[$(G.1)$] The censoring density $g$ is uniformly continuous and bounded away from zero and infinity on $(0,M_1)$, i.e.\ $0 < c_g \leq g(t) \leq C_g < \infty$ for all $t\in(0,M_1)$.
\item[$(C.1)$] The binwidths $\dn$ and $\en$ converge to zero such that $n\dn\en\to\infty$ as $n\to\infty$.
\end{itemize}
Note that if $\dn \asymp n^{-1/5}$ and $\en \asymp n^{-1/5}$, condition $(C.1)$ is satisfied. The choice of $\dn$ of order $n^{-1/5}$ is probably optimal. One can also choose $\en$ of this order, but it is probably better to choose $\en\ll \dn$, see for a discussion on this matter \shortciteN{witte:11spie}. Further remarks on the problem of binwidth choice can be found in section \ref{sec:discussion}.

\begin{lemma}\label{lem:cons_hMS_cm}
Let $f_0$ and $g$ satisfy conditions $(F.1)$ and $(G.1)$ and $\dn,\ \en$ condition $(C.1)$. Furthermore, assume that $\int_0^{M_1}\log h_0(t)\,dt < \infty$, $\int_0^{M_1}\int_0^{M_1}\log h_1(t,z)\,dz\,dt < \infty$. Then \hMS\ is Hellinger-consistent for $\hf$, i.e.
\bef\label{Hcons_hMS_cm}
\H\big(\hMS,\hf\big) \Pconv 0.
\eef
\end{lemma}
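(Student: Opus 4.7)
The defining property of $\hat f^{MS}$, already used in the Kullback--Leibler identity displayed before Lemma~\ref{lem:rewr_lS_cm}, is that $\hat f^{MS}$ is the $\K$-projection of $\hn$ onto the piecewise constant submodel $\{h_f : f \in \F_n\}$, so that
\be \K(\hn, \hMS) \leq \K(\hn, h_f) \qquad \text{for every } f \in \F_n. \ee
My strategy is to combine this with the standard bound $2\H^2 \leq \K$ and the triangle inequality,
\be \H(\hMS, \hf) \leq \H(\hMS, \hn) + \H(\hn, \hf) \leq \sqrt{\K(\hn, \hMS)/2} + \H(\hn, \hf), \ee
so that it is enough to exhibit one deterministic competitor $\bar f_n \in \F_n$ with $\K(\hn, h_{\bar f_n}) \Pconv 0$ (which upper bounds $\K(\hn, \hMS)$) and, independently, to establish $\H(\hn, \hf) \Pconv 0$.

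For the competitor I would take $\bar f_n \in \F_n$ whose value on $A_{n,i}\times B_{n,j}$ is the cell average of $f_0$, renormalized to integrate to $1$ (the normalization factor is $1 + o(1)$). Conditions $(F.1)$ and $(C.1)$ together with uniform continuity of $f_0$ on $\W_M$ give $\|\bar f_n - f_0\|_\infty \to 0$, and $(G.1)$ transfers this to $\|h_{\bar f_n} - \hf\|_\infty \to 0$ on the observational support, so that $h_{\bar f_n}$ is uniformly bounded away from $0$ and $\infty$ on the relevant region for all large $n$. The piece $\H(\hn, \hf) \Pconv 0$ is standard histogram consistency: cell-wise, $\hn$ is a rescaled binomial proportion with mean the cell average of $\hf$, and $(F.1),(G.1),(C.1)$ give vanishing bias and variance uniformly in the cell index (the latter using $n\dn\en \to \infty$), yielding $\|\hn - \hf\|_1 \Pconv 0$.

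The main obstacle is the Kullback--Leibler convergence $\K(\hn, h_{\bar f_n}) \Pconv 0$: unlike $L_1$ or $\H$, the divergence $\K$ is sensitive to small values of $h_{\bar f_n}$ and to unbounded log-tails. I would split
\be \K(\hn, h_{\bar f_n}) = \int \hn \log \hn \,d\lambda - \int \hn \log h_{\bar f_n} \,d\lambda \ee
and show each integral converges in probability to $\int \hf \log \hf \,d\lambda$. For the first term (the negative histogram entropy) one compares $\hij$ and $\hi$ to their expectations cell by cell, using the uniform lower bound on $\hf$ on the compact support to keep the logs finite and $n\dn\en \to \infty$ from $(C.1)$ to get the law of large numbers across all cells simultaneously. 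For the second term, write $\log h_{\bar f_n} = \log \hf + \log(h_{\bar f_n}/\hf)$; the log-ratio is $o(1)$ uniformly by the previous paragraph, while $\int \hn \log \hf \,d\lambda \Pconv \int \hf \log \hf \,d\lambda$ follows by a law-of-large-numbers argument that uses precisely the integrability hypotheses $\int_0^{M_1}\log h_0\,dt<\infty$ and $\iint \log h_1\,dz\,dt <\infty$ to dominate the log-integrand.
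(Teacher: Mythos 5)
Your proposal is correct in outline and shares its skeleton with the paper's argument: both exploit that \fMS\ minimizes $f\mapsto\K(\hn,h_f)$ over $\F_n$, compare against the piecewise constant representative $\fn$ of $f_0$ (your $\bar f_n$; no renormalization is actually needed, since averaging $f_0$ over cells that tile $\W_M$ already yields a probability density), pass from $\K$ to $\H$ via (\ref{rel_H_KL}), and close with the triangle inequality. Your two--term triangle inequality, which requires $\H(\hn,\hf)\Pconv 0$ directly, is a harmless variant of the paper's three--term version through $\hfn$, which obtains $\H(\hn,\hfn)\to0$ from the same Kullback--Leibler bound and $\H(\hfn,\hf)\to0$ from Lemma \ref{lem:sup_conv_fn} together with (\ref{rel_H_L1c}). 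The genuine divergence is in how the decisive estimate $\K(\hn,\hfn)\Pconv0$ (Lemma \ref{lem:conv_K_hn_hfn}) is established. The paper decomposes it into $\int\hn\log(\hn/\hf)$, $-\int\hn\log(\bar h_n/\hf)$ and $-\int\hn\log(\hfn/\bar h_n)$ with $\bar h_n=\E\,\hn$, disposes of the first two terms by citing Theorems 5 and 4 of \shortciteN{barron:92}, and only treats the third by hand using $\norminfty{\hfn-\bar h_n}\to0$ and a lower bound on $\bar h_n$ coming from (\ref{cond:h}). You instead propose the self--contained split $\int\hn\log\hn-\int\hn\log h_{\bar f_n}$, showing each piece converges to $\int\hf\log\hf$. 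This is more elementary and more transparent about where each hypothesis enters, but two cautions are in order. First, the entropy term is exactly where the real work sits: writing $\int\hn\log\hn\,d\lambda$ in terms of the empirical cell frequencies $\Hn{n}\big(A_{n,i}\times B_{n,j}\big)$, the ``law of large numbers across all cells simultaneously'' with a growing number of cells requires an explicit second--moment computation for the multinomial entropy (a first--moment/union bound gives only an error of order $\log(k_nl_n)\sqrt{k_nl_n/n}$, which condition $(C.1)$ does not control); this is precisely the difficulty the paper outsources to \shortciteN{barron:92}. Second, you invoke a uniform lower bound on $\hf$ to keep the logarithms finite, which is not among the stated hypotheses of the lemma (only log--integrability of $h_0$ and $h_1$ is assumed there), although the paper itself quietly relies on a comparable lower bound for $\bar h_n$ via (\ref{cond:h}) in its proof of Lemma \ref{lem:conv_K_hn_hfn}. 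If you supply the entropy variance computation and state the lower--bound assumption explicitly, your route yields a complete and arguably more self--contained proof.
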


\begin{proof}
We establish (\ref{Hcons_hMS_cm}) using relation (\ref{rel_H_KL}) and the property that \fMS\ minimizes $\K\big(\hn,\hf\big)$. Since $f_0\not\in \F_n$ in general, the inequality
\be \K\big(\hn,h_{\fMS}\big) \leq \K\big(\hn,\hf\big)\ee
need not hold. In order to exploit the defining minimizing property of \fMS, we define a piecewise constant representative $\fn$ of $f_0$ which belongs to $\F_n$ and approximates $f_0$
\be\fn(t,z) = \dn^{-1}\en^{-1}\int_{A_{n,i}} \int_{B_{n,j}}f_0(u,v)\,dv\,du\ \text{if}\ (t,z)\in A_{n,i}\times B_{n,j}.\ee

For this representative it holds that
\bef\label{ineq:HKL_hMS_cm}
0 \leq 2\H\big(\hn, \hMS\big)^2 \leq \K\big(\hn, \hMS\big) \leq \K\big(\hn, \hfn\big),
\eef
also using relation (\ref{rel_H_KL}). The Hellinger distance is a metric, hence applying the triangle inequality twice gives
\bef\label{ineq:triangle_H_hMS_cm}
0 \leq \H\big(\hMS,\hf\big) \leq \H\big(\hMS,\hn\big) + \H\big(\hn, \hfn\big) + \H\big(\hfn,\hf\big).
\eef
The first term on the right hand side of (\ref{ineq:triangle_H_hMS_cm}) converges in probability to zero by combining (\ref{ineq:HKL_hMS_cm}) and Lemma \ref{lem:conv_K_hn_hfn}. The second term converges in probability to zero by combining Lemma \ref{lem:conv_K_hn_hfn} and relation (\ref{rel_H_KL}). The third term converges to zero by combining relation (\ref{rel_H_L1c}) and the second result in Lemma \ref{lem:sup_conv_fn}, hence (\ref{Hcons_hMS_cm}) follows.
\end{proof}

From Lemma \ref{lem:cons_hMS_cm}, it follows that $\FMS$ converges pointwise and in $L_1$-norm to $F_0$.
\begin{theorem}\label{th:cons_cm_Fn}
Under the conditions of Lemma \ref{lem:cons_hMS_cm},
\bef\label{L1conv:FMS}
\norm{\FMS-F_0}{1}\Pconv 0.
\eef
Consequently, for all $(t,z)\in\W_M$
\bef\label{pwconv:FMS}
\FMS(t,z)\Pconv F_0(t,z)
\eef
implying that $\sup_{\W_M}|\FMS(t,z)-F_0(t,z)|\Pconv0$.
\end{theorem}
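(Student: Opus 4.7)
The plan is to push the Hellinger convergence from Lemma~\ref{lem:cons_hMS_cm} through the structural formula $h_f(t,z)=1_{\{z>0\}}g(t)\partial_2F(t,z)+1_{\{z=0\}}g(t)(1-F_X(t))$ and then integrate. Since $\|p-q\|_1\leq 2\sqrt2\,\H(p,q)$ for probability densities, Lemma~\ref{lem:cons_hMS_cm} immediately yields $\int|\hMS-\hf|\,d\lambda\Pconv 0$. Splitting this $\lambda$-integral into its $z>0$ and $z=0$ parts via (\ref{def:lambda}) and pulling out the common factor $g(t)$ gives
\begin{equation*}
\int_0^\infty\!\!\int_0^\infty g(t)\,|\partial_2\FMS(t,z)-\partial_2F_0(t,z)|\,dz\,dt+\int_0^\infty g(t)\,|\FMSX(t)-F_{0,X}(t)|\,dt\Pconv 0.
\end{equation*}
Using $(F.1)$ to restrict attention to $[0,M_1]\times[0,M_2]$ and $(G.1)$ to divide out $g\geq c_g>0$ on $(0,M_1)$, we conclude $\int_0^{M_1}\!\int_0^{M_2}|\partial_2\FMS-\partial_2F_0|\,dz\,dt\Pconv 0$ and $\int_0^{M_1}|\FMSX-F_{0,X}|\,dt\Pconv 0$.

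Writing $\FMS(t,z)-F_0(t,z)=\int_0^z(\partial_2\FMS(t,v)-\partial_2F_0(t,v))\,dv$ and integrating once more in $z$ over $[0,M_2]$ yields
\begin{equation*}
\int_0^{M_1}\!\int_0^{M_2}|\FMS(t,z)-F_0(t,z)|\,dz\,dt\leq M_2\int_0^{M_1}\!\int_0^{M_2}|\partial_2\FMS(t,v)-\partial_2F_0(t,v)|\,dv\,dt\Pconv 0,
\end{equation*}
which is exactly (\ref{L1conv:FMS}).

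The pointwise claim (\ref{pwconv:FMS}) then follows from (\ref{L1conv:FMS}) by a standard contradiction argument that uses monotonicity and continuity: if $\FMS(t_0,z_0)$ failed to converge to $F_0(t_0,z_0)$ in probability, a subsequence argument would produce an event of positive limiting probability on which the deviation exceeds some $\epsilon>0$, and coordinatewise monotonicity of $\FMS$ combined with the continuity of $F_0$ (from $(F.1)$) would propagate a uniform gap of at least $\epsilon/2$ to a full neighborhood of $(t_0,z_0)$, contradicting (\ref{L1conv:FMS}). The sup statement then comes from the bivariate analogue of P\'olya's theorem on $\W_M$: cover the compact set by a finite grid on which $F_0$ oscillates at most $\epsilon$ (possible by uniform continuity), and use monotonicity of $\FMS$ to sandwich $\FMS(t,z)$ between its values at neighboring grid points, each close to the corresponding value of $F_0$ with high probability.

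The main obstacle I anticipate lies in the very first step: cleanly converting the single Hellinger bound on $\hMS-\hf$ (a mixed integral against $\lambda_1$ on $\{z=0\}$ and $\lambda_2$ on $\{z>0\}$) into separate $L_1$ bounds for $\partial_2\FMS-\partial_2F_0$ on $\W_M$ and for $\FMSX-F_{0,X}$ on $[0,M_1]$. This requires careful bookkeeping in the $\lambda$-decomposition and, if the cells $A_{n,i}\times B_{n,j}$ extend beyond $\W_M$, some extra control on the mass $\fMS$ might place outside the support of $f_0$; the subsequent integration and the monotonicity-plus-continuity upgrade to pointwise and uniform convergence are by now routine in the current-status literature.
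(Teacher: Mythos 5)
Your proposal is correct and follows essentially the same route as the paper: convert the Hellinger bound of Lemma \ref{lem:cons_hMS_cm} to an $L_1$ bound on $\hMS-\hf$, split the $\lambda$-integral into its $z>0$ and $z=0$ parts, divide out $g\geq c_g$ to get $L_1$ convergence of $\partial_2\FMS$, integrate in $z$ to obtain (\ref{L1conv:FMS}), and then upgrade to pointwise and uniform convergence by the same contradiction argument using monotonicity of \FMS\ and continuity of $F_0$. The only cosmetic difference is your explicit appeal to a bivariate P\'olya-type grid argument for the sup statement, which the paper leaves as a one-line remark.
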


\begin{remark}\rm
Because $\FMSX(t) = \FMS(t,M_2)$, this lemma implies that $\|\FMSX - F_{0,X}\|_{\infty}\Pconv0$.
\end{remark}

\begin{proof2}{Theorem \ref{th:cons_cm_Fn}}
By combining Lemma \ref{lem:cons_hMS_cm} and relation (\ref{rel_H_L1}), we have that
\begin{align*} \norm{\hMS-\hf}{1} =& \int_{\W_M} \left|\hMS(t,z)-\hf(t,z)\right|\,d\lambda(t,z)\\
=& \int_0^{M_1}\int_0^{M_2}g(t)\big|\partial_2\FMS(t,z)-\partial_2F_0(t,z)\big|\,dz\,dt \\
&+\int_0^{M_1}g(t)\big|(1-\FMSX(t))-(1-F_{0,X}(t))\big|\,dt \leq \sqrt{2}\H\big(\hMS,\hf\big) \Pconv 0.\end{align*}
This implies that
\be \norm{\partial_2\FMS-\partial_2F_0}{1} = \int_{\W_M}\big|\partial_2\FMS(t,z)-\partial_2F_0(t,z)\big|\,d\lambda(t,z) \Pconv 0,\ee
since $g>0$ on $(0,M_1)$. To prove (\ref{L1conv:FMS}) note that the $L_1$-distance between \FMS\ and $F_0$ can be bounded by
\begin{align*} \norm{\FMS-F_0}{1} &= \int_{\W_M} \left|\FMS(t,z)-F_0(t,z)\right|\,d\lambda(t,z)\\
&= \int_{\W_M} \left|\int_0^z\left(\partial_2\FMS(t,v)-\partial_2F_0(t,v)\right)\,dv\right|\,d\lambda(t,z)\\
&\leq \int_{t=0}^{M_1}\int_{v=0}^{M_2}\int_{z=v}^{M_2}\big|\partial_2\FMS(t,v)-\partial_2F_0(t,v)\big|\,dz\,dv\,dt\\
&\leq M_2\norm{\partial_2\FMS-\partial_2F_0}{1} \Pconv 0.\end{align*}

To prove (\ref{pwconv:FMS}), assume it does not hold for a certain $(t,z)\in\W_M$, i.e. there exists $\eps>0$, $\delta>0$ and a subsequence $n_k$ of $n$ such that for all $k\in\NN$
\be P\left(\big|\hat F_{n_k}^{MS}(t,z) - F_0(t,z)\big| \geq \delta\right) \geq \eps.\ee
Assume $\hat F_{n_k}^{MS}(t,z) \leq F_0(t,z) - \delta$, then there exists a small $c>0$ such that
\be \forall\ (u,v) \in [t-c\delta,t]\times[z-c\delta,z] =: \A_{\delta} \ :\ \hat F_{n_k}^{MS}(u,v) \leq F_0(u,v) - \frac12\delta,\ee
by continuity of $F_0$ and monotonicity of \FMS. This implies that for all $k$
\be \lefteqn{P\left(\int_{\W_M} \big| \hat F_{n_k}^{MS}(t,z) - F_0(t,z)\big|\,dz\,dt \geq \frac{c^2}{2}\delta\right)}\\
&&\geq P\left(\int_{\A_{\delta}} \big|\hat F_{n_k}^{MS}(u,z) - F_0(u,z)\big|\,dz\,du \geq \frac{c^2}{2}\delta\right)\\
&&\geq P\left(\forall\ (u,v)\in\A_{\delta} : \hat F_{n_k}^{MS}(u,v) \leq F_0(u,v) - \frac12\delta\right) \geq \eps.
\ee
If $\hat F_{n_k}^{MS}(t,z) \geq F_0(t,z) + \delta$, we have by a similar argument that
\be \forall\ (u,v) \in [t,t+c\delta]\times[z,z+c\delta] =: \A_{\delta} \ :\ \hat F_{n_k}^{MS}(u,v) \geq F_0(u,v) + \frac12\delta,\ee
giving that for all $k$
\be \lefteqn{P\left(\int_{\W_M} \big| \hat F_{n_k}^{MS}(t,z) - F_0(t,z)\big|\,dz\,dt \geq \frac{c^2}{2}\delta\right)}\\
&&\geq P\left(\forall\ (u,v)\in\A_{\delta} : \hat F_{n_k}^{MS}(u,v) \geq F_0(u,v) + \frac12\delta\right) \geq \eps.\ee
This contradicts (\ref{L1conv:FMS}). Strengthening pointwise consistency to uniform consistency over $\W_M$ follows from monotonicity of $ \hat F_{n_k}^{MS}$ and $F_0$ and the assumed smoothness of $F_0$.
\end{proof2}

\section{Discussion}\label{sec:discussion}
In this paper we have considered consistency of the MSLE, where the observation distribution was smoothed by using histogram type estimators. Rigorous derivation of the asymptotic distribution is at this moment still not available. Heuristic considerations indicate that, if $(t_0,z_0)$ is an interior point of the support of $f_0$, and the binwidth for the first coordinate satisfies $\dn\sim c_1n^{-1/5}$, whereas the binwidth for the second coordinate satisfies $n^{-2/5}\ll\en\ll n^{-1/5}$, we get
\be n^{2/5}\big(\FMS(t_0,z_0)-F_0(t_0,z_0)\big)\Dconv\N(\beta,\sigma^2),\ee
where $\N(\beta,\sigma^2)$ is a normal distribution with expectation
\be \beta=\frac{\partial_1F_0(t_0,z_0)g'(t_0)c_1^2}{6g(t_0)},\ee
and variance
\be \sigma^2=\frac{F_0(t_0,z_0)\big(1-F_0(t_0,z_0)\big)\sqrt{3}}{2c_1g(t_0)}.\ee

This implies that the asymptotically optimal number of cells for the first coordinate would satisfy
\be\dn^{-1}\sim 3^{-1/2}\left(\frac{2 g'(t_0)^2\partial_1F_0(t_0,z_0)^2}{g(t_0)F_0(t_0,z_0)\big(1-F_0(t_0,z_0)\big)}\,n\right)^{1/5}.\ee
implying that the optimal number of cells on the first coordinate is rather small for the model on which the simulations, reported below, are based.

\begin{figure}[!ht]
\centering
\subfigure{
 \includegraphics[width=0.45\textwidth]{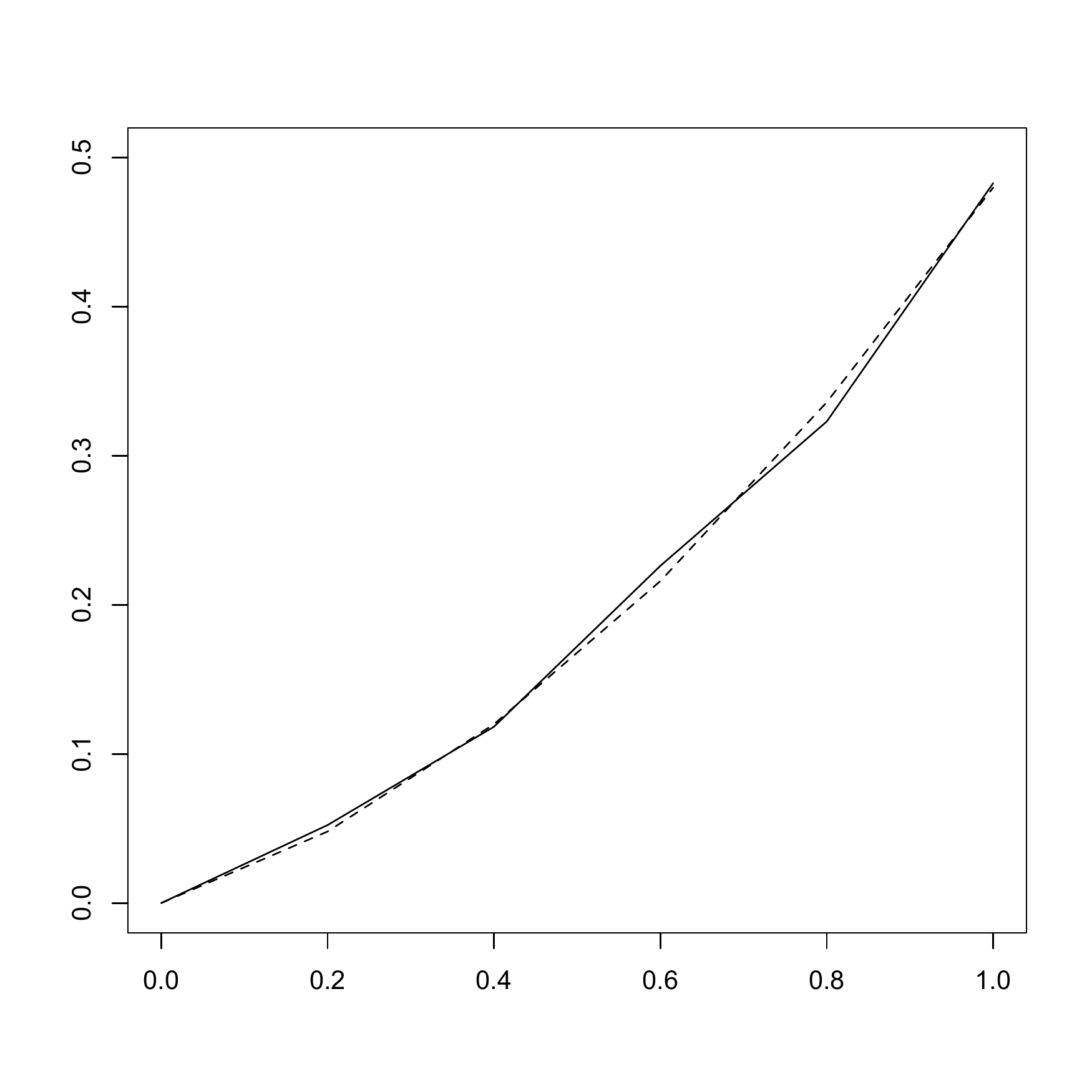}}
\subfigure{
 \includegraphics[width=0.45\textwidth]{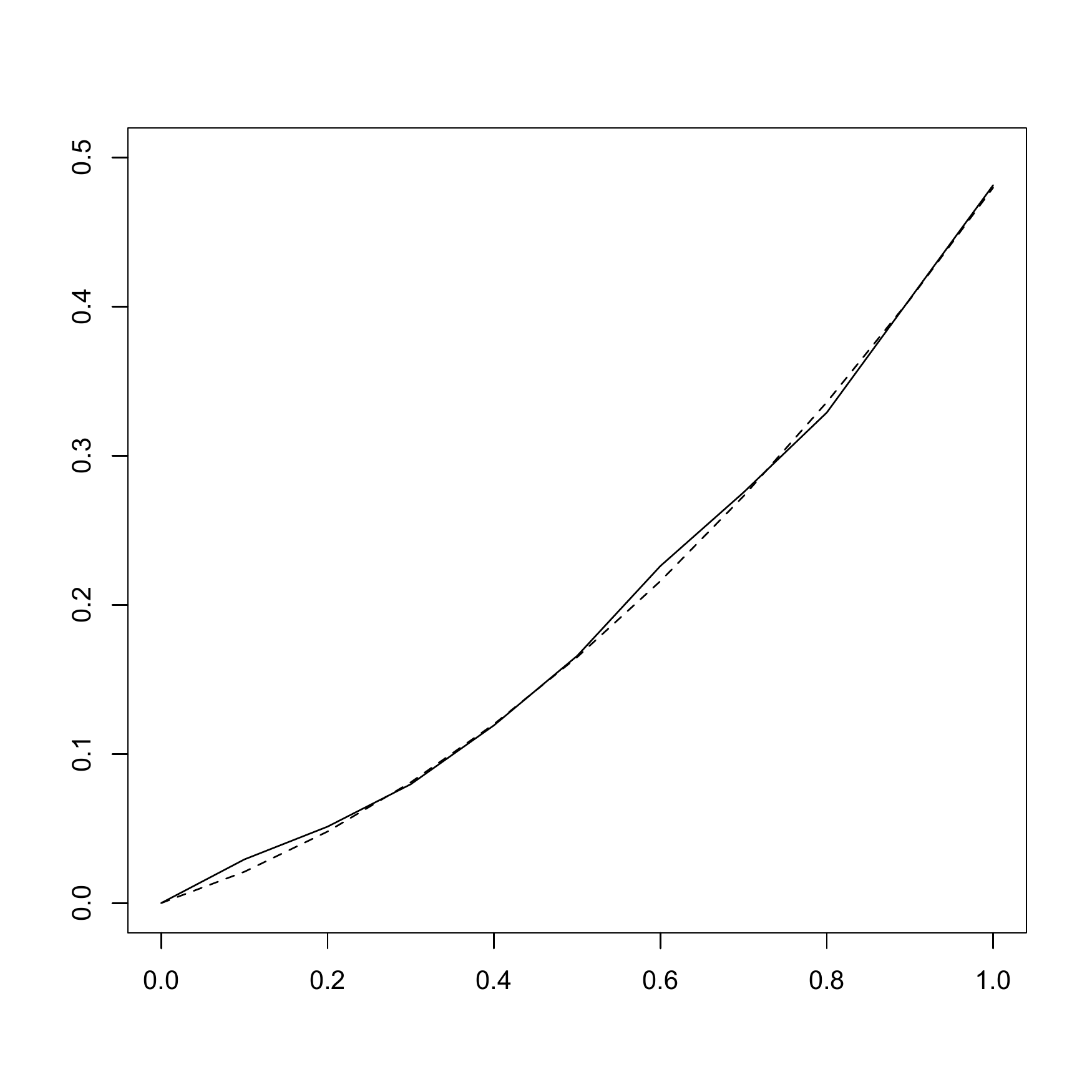}}
\caption{Estimates of the function $t\mapsto F_0(t,z)$, where $z=0.6$. The MSLE is shown in the left panel and the plug-in estimate $F_n$, defined by (\ref{plugin_est}), in the right panel. The MSLE and $F_n$ are the piecewise linear solid curves in the pictures and the dashed curves represent the real $F_0$, where $F_n$ is linearly extended to the last interval (where it can not be defined by interpolation between values at successive points of the grid). Moreover, $F_0(t,z)=\tfrac12 tz(t+z)$, $g(t)=2t$, and the sample size for which the estimators were computed was $n=5000$. The binwidth for the first coordinate was $0.2$ for the MSLE and $0.1$ for the plug-in estimator. For the second coordinate we took binwidth $0.2$ for both estimators.}
\label{fig:F(t,0.6)}
\end{figure}

The behavior of the MSLE $\FMS$ is somewhat similar to that of the plug-in estimator $F_n$, defined by
\bef\label{plugin_est}
F_n(a_{n,i},b_{n,j})=\frac{\int_{t\in A_{n,i}\cup A_{n,i+1},\,z\in(0,b_{n,j}]}\,d\Hn{n}(t,z)}{\int_{t\in A_{n,i}\cup A_{n,i+1}}\,d\Gn{n}(t)},
\eef
at the points $(a_{n,i},b_{n,j})$ of the grid, and by linear interpolation elsewhere (except on the last interval $A_{n,k_n}$, where it was just linearly extended), where $\Gn{n}$ is the empirical distribution function of the observations $T_1,\dots,T_n$. However $F_n$ and $\FMS$ are not asymptotically equivalent, as first was noticed in the simulations. We have, as $(a_{n,i},b_{n,j})\to(t_0,z_0)$, under the same conditions on the binwidth $\dn$ and $\en$ as used above,
\be n^{2/5}\big(F_n(a_{n,i},b_{n,j})-F_0(a_{n,i},b_{n,j})\big)\Dconv\N(\beta_2,\sigma_2^2),\ee
where
\be \beta_2=\left\{\frac16\partial_1^2F_0(t_0,z_0)+\frac{\partial_1F_0(t_0,z_0)g'(t_0)}{3g(t_0)}\right\}c_1^2, \qquad \sigma_2^2=\frac{F_0(t_0,z_0)\big(1-F_0(t_0,z_0)\big)}{2c_1g(t_0)}.\ee
This implies that the asymptotic variance is smaller by a factor $\sqrt{3}$ than the conjectured asymptotic variance of the MSLE. On the other hand, the asymptotic bias is larger than the conjectured bias of the MSLE $\FMS$ in the model, used in the simulations which produced Table \ref{table:simstudy}. It seems unavoidable that the relation between plug-in estimators of this type and our MSLE $\FMS$ involves the partial derivative $\partial_2\FMS$, which makes the analysis rather complicated. 

Other smoothing methods are also possible, for example using kernel estimators instead of histogram estimators for the smoothing of the observation distribution. However, we do not know how to compute the MSLE for this type of smoothing. Using a smoothed MLE (SMLE) is not sensible because it inherits the inconsistency of the unsmoothed MLE.

In Table \ref{table:simstudy} we compare the local mean squared error (MSE) of the MSLE with the MSE's of other comparable estimators. On the second coordinate we took $5$ cells for the MSLE, which means that the bias on the second coordinate does not play a role, since $0.6$ is then a point of the grid for the second coordinate, and on the first coordinate we took the number of cells between 4 (for $n=500$) and 7 (for $n=10\,000$). The results were obtained by generating $10\,000$ samples for each value of $(t,z)$, considered in the table, and each sample size $n$. We compared the results with the MSE's of the plug-in estimator $\Fpi{1}$, studied in \shortciteN{witte:11spie}, and defined by
\bef\label{F_n1}
\Fpi{1}(t_0,z_0)=\frac{\int_{z\in(0,z_0]}k_{\dn}(t_0-u)\,d\Hn{n}(u,z)}{\int k_{\dn}(t_0-u)\,d\Gn{n}(u)},\qquad k_{\dn}(u)=\dn^{-1}k(u/\dn),
\eef
where $k$ is a smooth symmetric kernel with support $[-1,1]$, for example the Epanechnikov kernel, and $\dn$ the bandwidth. Note the similarity between $(\ref{F_n1})$ and (\ref{plugin_est}). We also included the binned MLE of \citeN{maathuis:08} in our comparison. The values for $\Fpi{1}$ and the binned MLE were taken from Table 5.1, p.\ 115, \citeN{witte:11}, where the bandwidths, resp.\ binwidth, were chosen in such a way that the MSE was minimized. As can be seen from the table, none of the four estimators comes out as uniformly best in this situation.

\begin{table}[!ht]
\centering
\caption{Estimated values of the MSE for four estimators of $F_0(t_0,0.6)$ at a number of values of $t_0$. The boldfaced values in each row are the minimal values of the MSE in that row.}
\label{table:simstudy}
\vspace{0.5cm}
\begin{tabular*}{\textwidth}{@{\extracolsep{\fill}}lrcccc}
\hline
 & \\[-4mm]
 & $n$ & MSLE \FMS & Plug-in $F_n$ & $\Fpi{1}$ & binned MLE\\
 & \\[-5mm]
\hline
 & \\[-4mm]
$t_0=0.2$ & $500$ & $2.12\times10^{-3}$ & $1.41\times10^{-3}$ & $2.81\times10^{-3}$ &$\mathbf{7.84\times10^{-4}}$ \\
 & $1\,000$ & $1.86\times 10^{-3}$ & $7.73\times 10^{-4}$ & $1.53\times10^{-3}$ & $\mathbf{2.01\times10^{-4}}$\\
 & $5\,000$ & $3.19\times 10^{-4}$ & $1.96\times 10^{-4}$ & $2.04\times10^{-4}$ & $\mathbf{1.49\times 10^{-4}}$\\
 & $10\,000$ & $1.35\times 10^{-4}$ & $\mathbf{1.11\times 10^{-4}}$ & $9.59\times10^{-5}$ & $1.13\times 10^{-4}$\\
 & \\[-4mm]
\hline
 & \\[-5mm]
$t_0=0.4$ & $500$ & $\mathbf{8.39\times10^{-4}}$ & $1.25\times 10^{-3}$ & $9.07\times 10^{-4}$ &$1.21\times 10^{-3}$\\
 & $1\,000$ & $\mathbf{4.90\times10^{-4}}$ & $7.07\times 10^{-4}$ & $5.94\times 10^{-4}$ &$6.74\times 10^{-4}$\\
 & $5\,000$ & $\mathbf{1.21\times10^{-4}}$ & $1.90\times10^{-4}$ & $1.32\times 10^{-4}$ &$2.37\times 10^{-4}$\\
 & $10\,000$ & $\mathbf{8.35\times10^{-5}}$ & $1.08\times10^{-4}$ & $8.95\times 10^{-5}$ &$1.35\times 10^{-4}$\\
 & \\[-4mm]
\hline
 & \\[-5mm]
$t_0=0.6$ & $500$ & $\mathbf{6.32\times10^{-4}}$ &  $1.17\times10^{-3}$ & $8.21\times 10^{-4}$ &$1.38\times 10^{-3}$\\
 & $1\,000$ & $\mathbf{3.71\times10^{-4}}$ & $6.86\times10^{-4}$ & $5.31\times10^{-4}$ &$7.79\times 10^{-4}$\\
 & $5\,000$ & $1.48\times10^{-4}$ & $1.86\times10^{-4}$ & $\mathbf{1.21\times10^{-4}}$ &$2.11\times 10^{-4}$\\
 & $10\,000$ & $\mathbf{7.80\times10^{-5}}$ & $1.06\times 10^{-4}$ & $9.21\times 10^{-5}$ &$1.31\times 10^{-4}$\\
 & \\[-4mm]
\hline
 & \\[-5mm]
$t_0=0.8$ & $500$ & $6.71\times10^{-4}$ &  $9.43\times10^{-4}$ & $\mathbf{5.91\times10^{-4}}$ &$1.39\times 10^{-3}$\\
 & $1\,000$ & $5.88\times10^{-4}$ & $5.85\times10^{-4}$ & $\mathbf{3.14\times10^{-4}}$ &$8.59\times 10^{-4}$\\
 & $5\,000$ & $9.65\times10^{-5}$ & $1.81\times10^{-4}$ & $\mathbf{5.61\times10^{-5}}$ &$2.27\times 10^{-4}$\\
 & $10\,000$ & $5.84\times10^{-5}$ & $1.04\times10^{-4}$ & $\mathbf{3.25\times10^{-5}}$ &$1.39\times 10^{-4}$\\
 & \\[-4mm]
\hline
\end{tabular*}
\end{table}

\appendix
\section{Technical lemmas and proofs}\label{app:technlem}
In this section, we prove most of the results stated in the previous sections as well as some technical lemmas needed in these proofs. We start with some known results on several distances.

Let $f$ and $g$ be two probability densities with respect to a dominating measure $\mu$. Let \H\ and \K\ denote the Hellinger distance and the Kullback-Leibler divergence between $f$ and $g$ respectively, i.e.
\be \H\big(f,g\big) = \sqrt{\frac{1}{2}\int\big(\sqrt{f(x)}-\sqrt{g(x)}\big)^2\,d\mu(x)}, \qquad \K\big(f,g\big) = \int f(x)\log\frac{f(x)}{g(x)}\,d\mu(x).\ee
Between \H, \K\ and the $L_1$-norm $\norm{\cdot}{1}$ we use the following relations
\bef
\lefteqn{2\H\big(f,g\big)^2 \leq \K\big(f,g\big),} \label{rel_H_KL}\\
\lefteqn{\H\big(f,g\big)^2 \leq \frac12\norm{f-g}{1} \leq \sqrt{2}\H\big(f,g\big),} \label{rel_H_L1}
\eef
see e.g., \citeN{geer:00} Lemma 1.3 for (\ref{rel_H_KL}) and \citeN{lecam:86} p.\ 47 for (\ref{rel_H_L1}). If $f$ and $g$ have compact support $\C$ with finite measure $\mu(\C)=C<\infty$, then
\bef\label{rel_H_L1c}
\H\big(f,g\big)^2 \leq \norm{f-g}{1} = \int_{\C}\big|f(x)-g(x)\big|\,d\mu(x) \leq C\norminfty{f-g}.
\eef

Now, we can turn to the proofs and technical lemmas.

\begin{proof2}{Lemma \ref{lem:rewr_lS_cm}}
Let $f\in \F_n$, then for $t\in A_{n,i}=(a_{n,i-1},a_{n,i}],\ z\in B_{n,j}=(b_{n,j-1},b_{n,j}]$ we can write
\be \lefteqn{1-F_X(t) = \int_{u=t}^{M_1}\int_{z=0}^{M_2} f(u,z)\,dz\,du=(a_{n,i}-t)\sum_{j=1}^{l_n}\en f_{i,j}+\sum_{l=i+1}^{k_n}\sum_{j=1}^{l_n}\dn\en f_{l,j}}\\
\lefteqn{\partial_2 F(t,z) = \int_{u=0}^t f(u,z)\,du = \sum_{l=1}^{i-1} \dn f_{l,j} + (t-a_{n,i-1})f_{i,j},}\ee
so that
\be \lefteqn{\int_{A_{n,i}}\log\big(1-F_X(t)\big)\,dt = \int_{A_{n,i}}\log\left\{\sum_{j=1}^{l_n}\sum_{l=i+1}^{k_n}\dn\en f_{l,j} + a_{n,i}\sum_{j=1}^{l_n}\en f_{i,j} - t \sum_{j=1}^{l_n}\en f_{i,j}\right\},} \\
\lefteqn{\int_{A_{n,i}}\int_{B_{n,j}}\log \partial_2 F(t,z)\,dz\,dt = \en\int_{A_{n,i}}\log\left\{\sum_{l=1}^{i-1} \dn f_{l,j} - a_{n,i-1}f_{i,j} + f_{i,j}t\right\}\,dt.}\ee
Since we have for $0\leq a<b<\infty$, $\sigma\not=0$ and $\tau \geq -\sigma a$
\begin{align*} \int_a^b\log(\tau+\sigma t)\, dt &= \frac{1}{\sigma}\big[u\log u - u\big]_{u=\tau+\sigma a}^{\tau+\sigma b}\\
&=\frac{1}{\sigma}(\tau+\sigma b)\log\big(\tau+\sigma b\big) - \frac{1}{\sigma}(\tau+\sigma a)\log\big(\tau+\sigma a\big) - (b-a),\end{align*}
we get
\be \lefteqn{\int_{A_{n,i}}\log\left\{\sum_{j=1}^{l_n}\sum_{l=i+1}^{k_n}\dn\en f_{l,j} + a_{n,i}\sum_{j=1}^{l_n}\en f_{i,j} - t \sum_{j=1}^{l_n}\en f_{i,j}\right\}\,dt}\\
&&= -\frac{1}{\en\sum_{j=1}^{l_n}f_{i,j}} \left\{\left(\dn\en \sum_{l=i+1}^{k_n}\sum_{j=1}^{l_n}f_{l,j}\right)\log\left(\dn\en \sum_{l=i+1}^{k_n}\sum_{j=1}^{l_n}f_{l,j}\right)\right.\\
&&\phantom{-\frac{1}{\en\sum_{j=1}^{l_n}f_{i,j}} \left\{\right.} -\left.\left(\dn\en \sum_{l=i}^{k_n}\sum_{j=1}^{l_n}f_{l,j}\right)\log\left(\dn\en \sum_{l=i}^{k_n}\sum_{j=1}^{l_n}f_{l,j}\right)\right\}-\dn\ee
and
\be \lefteqn{\en\int_{A_{n,i}}\log\left\{\sum_{l=1}^{i-1} \dn f_{l,j} - a_{n,i-1}f_{i,j} + f_{i,j}t\right\}\,dt}\\
&&=\en\frac{1}{f_{i,j}}\left\{\left(\sum_{l=1}^{i} \dn f_{l,j}\right)\log\left(\sum_{l=1}^{i} \dn f_{l,j}\right)-\left(\sum_{l=1}^{i-1} \dn f_{l,j}\right)\log\left(\sum_{l=1}^{i-1} \dn f_{l,j} \right)\right\} - \en\dn,\ee
so that
\begin{align*} l^S(f) =& -\sum_{i=1}^{k_n}\frac{\hi}{\en\sum_{j=1}^{l_n}f_{i,j}}\left\{\left(\dn\en \sum_{l=i+1}^{k_n}\sum_{j=1}^{l_n}f_{l,j}\right)\log\left(\dn\en \sum_{l=i+1}^{k_n}\sum_{j=1}^{l_n}f_{l,j}\right)\right.\\
&\phantom{-\sum_{i=1}^{k_n}\frac{\hi}{\en\sum_{j=1}^{l_n}f_{i,j}}\left\{\right.}
-\left.\left(\dn\en \sum_{l=i}^{k_n}\sum_{j=1}^{l_n}f_{l,j}\right)\log\left(\dn\en \sum_{l=i}^{k_n}\sum_{j=1}^{l_n}f_{l,j}\right)\right\}\\
& +\sum_{i=1}^{k_n}\sum_{j=1}^{l_n}\frac{\en\hij}{f_{i,j}}\left\{\left(\dn\sum_{l=1}^{i} f_{l,j}\right)\log\left(\dn\sum_{l=1}^{i} f_{l,j}\right)-\left(\dn\sum_{l=1}^{i-1} f_{l,j}\right)\log\left(\dn\sum_{l=1}^{i-1} f_{l,j} \right)\right\} \\
& - \sum_{i=1}^{k_n}\dn\hi - \sum_{i=1}^{k_n}\sum_{j=1}^{l_n}\dn\en\hij. \end{align*}
The last two terms can be left out in the maximization, since they do not depend on $f$. Now, taking $\ai(f)$ and $\bij(f)$ as in (\ref{def:alpha_beta}) we have that
\be \en\sum_{j=1}^{l_n}f_{i,j} = -\frac{1}{\dn}(\alpha_{i+1}(f) - \ai(f)), \qquad f_{i,j} = \frac{1}{\dn}\left(\bij(f)-\beta_{i-1,j}(f)\right),\ee
so that $l^S(f) = \psi(f)$ for
\be \psi(f) = \dn\sum_{i=1}^{k_n} \hi\ \phi\big(\alpha_{i+1}(f),\ai(f)\big)+ \dn\en\sum_{i=1}^{k_n}\sum_{j=1}^{l_n} \hij\ \phi\big(\bij(f),\beta_{i-1,j}(f)\big)\ee
with $\phi$ as defined in (\ref{def:phixy}), hence the maximizer of $l^S(f)$ is the maximizer of $\psi(f)$.

The estimator \fMS\ has to satisfy the following conditions
\be (S.1) && \dn\en\sum_{i=1}^{k_n}\sum_{j=1}^{l_n}f_{i,j} = 1,\\
(S.2) && \forall\ i,j\ :\ f_{i,j} \geq 0. \ee
To get condition $(S.1)$ in the objective function $\psi(f)$, we include a Langrange multiplier $\lambda\in\R$ and maximize
\be \psi_{\lambda}(f) = \psi(f) + \lambda\left(\dn\en\sum_{i=1}^{k_n}\sum_{j=1}^{l_n}f_{i,j}-1\right)\ee
over $\cP_n = \big\{f\in\R^{k_nl_n}: f_{i,j} \geq 0 \ \forall i,j\big\}$. For any function $f$ satisfying condition $(S.1)$ and each $\lambda$, $\psi_{\lambda}(f)$ equals $\psi(f)$ and for $\hat f_{\lambda} = \arg\max_{f\in\cP_n}\psi_{\lambda}(f)$ we have
\be 0 = \lim_{\gamma\to0}\gamma^{-1}\Big(\psi_{\lambda}\big((1+\gamma)\hat f_{\lambda}\big) - \psi_{\lambda}\big(\hat f_{\lambda}\big)\Big) = 1 + \lambda\dn\en\sum_{i=1}^{k_n}\sum_{j=1}^{l_n}\hat f_{\lambda,i,j}.\ee
This means that if we take $\lambda=-1$, the global maximizer of $\psi_{\lambda}(f)$ over $\cP_n$ is contained in $\F_n$, so that
\be \arg\max_{f\in\cP_n}\psi_{-1}(f) 
= \arg\max_{f\in\F_n} \psi(f).\ee 
Since we also have that $\F_n\subset\B_n$, it follows that $\arg\max_{f\in\cP_n}\psi_{-1}(f)\in\B_n$, hence
\be \arg\max_{f\in\F_n}l^S(f) = \arg\max_{f\in\B_n}\psi_{-1}(f).\ee
\end{proof2}

The piecewise constant representative $\fn$ and the corresponding $\hfn$ converge to the true $f_0$ and $\hf$ under condition $(C.1)$. This is stated in Lemma \ref{lem:sup_conv_fn} below.
\begin{lemma}\label{lem:sup_conv_fn}
Let $f_0$ and $g$ satisfy conditions $(F.1)$ and $(G.1)$ and $\dn$, $\en$ condition $(C.1)$, then
\be \norminfty{\fn - f_0} \conv 0, \qquad \norminfty{\hfn-\hf} \conv 0.\ee
\end{lemma}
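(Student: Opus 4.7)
The plan is to deduce both uniform convergences from the uniform continuity of $f_0$ on $\W_M$ afforded by condition $(F.1)$. First I would fix $\eps > 0$ and choose $\delta > 0$ so that $|f_0(u,v) - f_0(u',v')| < \eps$ whenever $|(u,v) - (u',v')| < \delta$. Under $(C.1)$ both $\dn$ and $\en$ tend to zero, so for $n$ large enough the diagonal of every cell $A_{n,i}\times B_{n,j}$ is smaller than $\delta$. For any $(t,z)$ in such a cell, the defining formula for $\fn$ as a cell average of $f_0$ then yields
$$\bigl|\fn(t,z) - f_0(t,z)\bigr| \leq \frac{1}{\dn\en}\int_{A_{n,i}\times B_{n,j}}\bigl|f_0(u,v) - f_0(t,z)\bigr|\,dv\,du < \eps.$$
Taking the supremum over $(t,z)$ and letting $\eps \downarrow 0$ gives $\norminfty{\fn - f_0} \to 0$.

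The second convergence would follow by expressing $\hfn - \hf$ in terms of $\fn - f_0$ and invoking the boundedness of $g$ from $(G.1)$. Splitting at $z=0$, on $\{z > 0\}$ I would use
$$\bigl|\hfn(t,z) - \hf(t,z)\bigr| = g(t)\left|\int_0^t \bigl(\fn(u,z) - f_0(u,z)\bigr)\,du\right| \leq C_g M_1 \norminfty{\fn - f_0},$$
while on $\{z = 0\}$,
$$\bigl|\hfn(t,0) - \hf(t,0)\bigr| = g(t)\bigl|F_{\fn,X}(t) - F_{0,X}(t)\bigr| \leq C_g M_1 M_2 \norminfty{\fn - f_0}.$$
Combining the two cases and invoking the first part of the lemma then gives $\norminfty{\hfn - \hf} \to 0$.

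The only delicate point I foresee is the behavior near the boundary of $\W_M$. If a cell $A_{n,i}\times B_{n,j}$ straddles $\partial \W_M$, then $\fn$ on that cell averages $f_0$ values inside the support with zeros outside, and need not be close to $f_0$ evaluated at an interior point of the cell unless $f_0$ vanishes on $\partial \W_M$. The natural reading of $(F.1)$ is that the extension of $f_0$ by zero to $[0,\infty)^2$ is continuous, which forces $f_0|_{\partial \W_M} = 0$ and hence uniform continuity globally; the cell-average argument then goes through without modification. Should one prefer a weaker reading of $(F.1)$, the offending boundary cells form an $O(\dn + \en)$-sliver whose contribution can be separated off and shown to be negligible, so the sup-norm statement of the lemma is still obtained on the (possibly slightly shrunk) region that is actually used in the downstream application via (\ref{rel_H_L1c}).
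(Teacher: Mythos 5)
Your proof is correct and follows essentially the same route as the paper: the cell-average bound via uniform continuity of $f_0$ on its compact support for the first claim, and the bound $\norminfty{\hfn-\hf}\leq\norminfty{g}\,(M_1+M_1M_2)\norminfty{\fn-f_0}$ obtained by splitting at $z=0$ for the second. Your closing remark about cells straddling $\partial\W_M$ identifies a genuine subtlety that the paper's proof passes over silently (it implicitly takes the grid to tile $\W_M$ exactly), and either of your two resolutions is adequate.
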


\begin{proof}
For the proof of the first result, note that for $(t,z)\in A_{n,i}\times B_{n,j}$
\begin{align*} |\fn(t,z) - f_0(t,z)| &= \left|\dn^{-1}\en^{-1}\int_{A_{n,i}}\int_{B_{n,j}} f_0(u,v)\,du\,dv -f_0(t,z)\right|\\
&\leq \dn^{-1}\en^{-1}\int_{A_{n,i}}\int_{B_{n,j}} \big|f_0(u,v)-f_0(t,z)\big|\,du\,dv\\
&\leq\max_{(u,v)\in A_{n,i}\times B_{n,j}} |f_0(u,v)-f_0(t,z)|.\end{align*}
Using $(C.1)$ and the uniform continuity of $f_0$ on $\W_M$ we get for $(t,z)\in\W_M$
\be |\fn(t,z) - f_0(t,z)| \leq \max_{i,j}\max_{(u,v)\in A_{n,i}\times B_{n,j}} |f_0(u,v)-f_0(t,z)| \conv 0,\ee
uniformly in $(t,z)$ as $n\to\infty$. This implies the first result.

For the proof of the second result, note that for $(t,z)\in\W_M$
\begin{align*} |\hfn(t,z) - \hf(t,z)| =& \left|g(t)\left\{1_{\{z>0\}}\int_{u=0}^t\big\{\fn(u,z) - f_0(u,z)\big\}\,du \right.\right.\\
&\phantom{\left|g(t)\left\{\right.\right.}\left.\left. + 1_{\{z=0\}}\int_{u=t}^{M_1}\int_{z=0}^{M_2}\big\{\fn(u,z) - f_0(u,z)\big\}\,dz\,du\right\}\right|\\
\leq& \norminfty{g}\left\{ M_1\norminfty{\fn - f_0} + M_1M_2\norminfty{\fn - f_0} \right\}.\end{align*}
Since this upper bound does not depend on $t$ and $z$, the second result now follows from the first.
\end{proof}

\begin{lemma}
Define for $(t,z)\in A_{n,i}\times B_{n,j}$
\be \bar h_n(t,z) = \E\, \hn(t,z) =  \dn^{-1}\en^{-1}\int_{A_{n,i}}\int_{B_{n,j}}\hf(u,v)\,d\lambda(u,v),\ee
then, under the conditions of Lemma \ref{lem:sup_conv_fn},
\bef\label{univ_conv:hfn_bhn}
\norminfty{\hfn-\bar h_n} \conv 0.
\eef
\end{lemma}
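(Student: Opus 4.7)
The plan is to apply the triangle inequality
\[
\|\hfn - \bar h_n\|_\infty \leq \|\hfn - \hf\|_\infty + \|\hf - \bar h_n\|_\infty,
\]
and control the two pieces separately. The first piece already tends to zero by the second assertion of Lemma \ref{lem:sup_conv_fn}, so the real work is to show that $\bar h_n$, which by its definition is the cell-average of $\hf$ over $A_{n,i}\times B_{n,j}$ (and over $A_{n,i}\times\{0\}$ with respect to the measure $\lambda$), converges uniformly to $\hf$ on $\W_M$.

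For this reduction it suffices to verify that $\hf$ is uniformly continuous separately on its two pieces (corresponding to $z>0$ and $z=0$), because a cell-average of a uniformly continuous function on a compact set converges uniformly to that function once the cell diameters $\dn,\en$ tend to zero, which is guaranteed by condition $(C.1)$. On the piece $z>0$ we have $\hf(t,z)=g(t)\partial_2 F_0(t,z)=g(t)\int_0^t f_0(u,z)\,du$. By $(F.1)$, $f_0$ is continuous on the compact rectangle $\W_M$, hence uniformly continuous and bounded; therefore the map $(t,z)\mapsto\int_0^t f_0(u,z)\,du$ is jointly uniformly continuous on $\W_M$ (continuity in $t$ uses boundedness of $f_0$, continuity in $z$ uses uniform continuity of $f_0$ in $z$, and $M_1<\infty$ to pass the uniformity through the integral). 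Multiplying by the bounded uniformly continuous $g$ from $(G.1)$ preserves uniform continuity. On the piece $z=0$, the function $t\mapsto g(t)(1-F_{0,X}(t))$ is uniformly continuous on $[0,M_1]$ since both factors are.

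Given uniform continuity, fix $\eta>0$, pick $\delta>0$ from the joint modulus, and note that for any $(t,z)\in A_{n,i}\times B_{n,j}$ with $\dn,\en<\delta/2$ every $(u,v)$ in that cell satisfies $\|(u,v)-(t,z)\|<\delta$, so
\[
|\bar h_n(t,z)-\hf(t,z)|\leq \dn^{-1}\en^{-1}\!\int_{A_{n,i}}\!\int_{B_{n,j}}|\hf(u,v)-\hf(t,z)|\,dv\,du<\eta,
\]
and analogously on the $z=0$ cells. Taking the supremum over $(t,z)\in\W_M$ yields $\|\hf-\bar h_n\|_\infty\to 0$, completing the proof when combined with Lemma \ref{lem:sup_conv_fn}.

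The argument is essentially routine; the only point requiring a moment's care is the joint uniform continuity of $\partial_2 F_0$ in $(t,z)$, which would be the main (albeit mild) obstacle, but this follows cleanly from the compactness of $\W_M$ together with the continuity of $f_0$ postulated in $(F.1)$.
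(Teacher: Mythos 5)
Your proof is correct and follows essentially the same route as the paper: the same triangle inequality splitting $\norminfty{\hfn-\bar h_n}$ into $\norminfty{\hfn-\hf}$ (handled by Lemma \ref{lem:sup_conv_fn}) plus $\norminfty{\hf-\bar h_n}$, with the latter controlled by the uniform continuity of the two pieces $h_1$ and $h_0$ of $\hf$ and the shrinking cell diameters from $(C.1)$. Your explicit verification that $\partial_2 F_0$ is jointly uniformly continuous is a detail the paper merely asserts, but otherwise the arguments coincide.
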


\begin{proof}
First, note that
\be \norminfty{\hfn-\bar h_n} \leq \norminfty{\hfn-\hf} + \norminfty{\hf-\bar h_n}.\ee
The first term converges to zero by Lemma \ref{lem:sup_conv_fn}. We now prove that the second term also converges to zero. To see this, note that
\be \norminfty{\hf-\bar h_n} \leq \norminfty{h_1-\bar h_{1,n}} + \norminfty{h_0-\bar h_{0,n}}.\ee
Similarly as the first result in Lemma \ref{lem:sup_conv_fn}, we have for $(t,z)\in A_{n,i}\times B_{n,j}$
\begin{align*} \big|h_1(t,z) - \bar h_{1,n}(t,z)\big| &= \left|\dn^{-1}\en^{-1}\int_{A_{n,i}}\int_{B_{n,j}} \big(h_1(t,z) - h_1(u,v)\big)\,dv\,du\right|\\
&\leq \dn^{-1}\en^{-1}\int_{A_{n,i}}\int_{B_{n,j}} \big|h_1(t,z) - h_1(u,v)\big|\,dv\,du\\
&\leq \max_{(u,v)\in A_{n,i}\times B_{n,j}}\big|h_1(t,z) - h_1(u,v)\big|.\end{align*}
Both $g$ and $\partial_2 F_0$ are uniformly continuous, hence $h_1$ is as well and with condition $(C.1)$ we get for $(t,z)\in\W_M$
\be \big|h_1(t,z) - \bar h_{1,n}(t,z)\big| \leq \max_{i,j}\max_{(u,v)\in A_{n,i}\times B_{n,j}}\big|h_1(t,z) - h_1(u,v)\big| \conv 0,\ee
uniformly in $(t,z)$ as $n\to\infty$. Via a similar argument we get that
\be \big|h_0(t) - \bar h_{0,n}(t)\big| \leq \max_i\max_{u\in A_{n,i}}\big|h_0(t) - h_0(u)\big| \conv 0,\ee
uniformly in $t$ as $n\to\infty$, hence
$\norminfty{\hf-\bar h_n}\conv 0$.
\end{proof}

\begin{lemma}\label{lem:conv_K_hn_hfn}
Under the conditions of Lemma \ref{lem:cons_hMS_cm} such that \hf\ satisfies (\ref{cond:h}),
\bef\label{conv:K_hn_hfn}
\K\big(\hn,\hfn\big) \Pconv 0.
\eef
\end{lemma}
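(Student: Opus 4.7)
The plan is to split the divergence through the piecewise constant intermediate $\bar h_n(t,z)=\E\,\hat h_n(t,z)$, the cell-wise average of $h_{f_0}$ on the partition used to define $\hat h_n$:
\begin{equation*}
\K\bigl(\hat h_n,h_{\bar f_n}\bigr)\;=\;\K\bigl(\hat h_n,\bar h_n\bigr)\;+\;\int\hat h_n(t,z)\log\frac{\bar h_n(t,z)}{h_{\bar f_n}(t,z)}\,d\lambda(t,z)\;=:\;I_1+I_2.
\end{equation*}
The first summand is purely stochastic, capturing the fluctuation of the histogram $\hat h_n$ around its mean $\bar h_n$ on the common partition, whereas the second is essentially deterministic since both $\bar h_n$ and $h_{\bar f_n}$ are deterministic approximations of the same limit $h_{f_0}$.

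For $I_1$, both $\hat h_n$ and $\bar h_n$ are constant on each of the $k_n(l_n+1)$ cells, so $I_1$ equals the discrete Kullback--Leibler divergence between the empirical cell probabilities $\hat P_{i,j}=\Hn{n}(A_{n,i}\times B_{n,j})$ and $\hat P_i=\Hn{n}(A_{n,i}\times\{0\})$ and their respective expectations $P_{i,j},P_i$. The elementary inequality $\K\le\chi^2$, combined with $\E[(\hat P_c-P_c)^2]=P_c(1-P_c)/n$, yields
\begin{equation*}
\E\,I_1\;\leq\;\frac{k_n(l_n+1)}{n},
\end{equation*}
which is of order $1/(n\dn\en)\to 0$ under $(C.1)$ together with $k_n\asymp\dn^{-1}$ and $l_n\asymp\en^{-1}$. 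Markov's inequality then gives $I_1\Pconv 0$.

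For $I_2$ the key inputs are the uniform convergences $\norminfty{\bar h_n-h_{f_0}}\to 0$ and $\norminfty{h_{\bar f_n}-h_{f_0}}\to 0$ furnished by (\ref{univ_conv:hfn_bhn}) and Lemma \ref{lem:sup_conv_fn}. I would fix a small $\eta>0$ and split $\W_M$ into an interior region $\{\eta\le t\le M_1-\eta\}$, on which $h_{f_0}$ is bounded below by a positive constant (using that $f_0$ stays away from zero on its support together with $(G.1)$), and its complement. On the interior, $\bar h_n/h_{\bar f_n}\to 1$ uniformly, so $|\log(\bar h_n/h_{\bar f_n})|=o(1)$ uniformly; since $\int\hat h_n\,d\lambda\le 1$, the interior contribution to $I_2$ is $o_P(1)$.

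The main obstacle is the boundary near $t=0$, where $h_{\bar f_n}(t,z)=g(t)\,t\,\bar f_{n,1,j}$ vanishes linearly in $t$ while the piecewise constant $\bar h_n$ stays of order $\dn$ on the first column of cells, so the ratio $\bar h_n/h_{\bar f_n}\sim\dn/t$ is not uniformly bounded. The rescue is that the blow-up is merely logarithmic and therefore $t$-integrable: the change of variables $s=t/\dn$ gives $\int_0^{\dn}|\log(\dn/(2t))|\,dt=\dn\log 2$, and the $\hat h_n$-mass on the first column equals $\sum_j\hat P_{1,j}$, bounded in expectation by $\int_0^{\dn}g(t)\,dt=O(\dn)$ by $(G.1)$. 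Multiplying these two factors yields a boundary contribution of order $O_P(\dn)\to 0$. The cell $A_{n,k_n}$ on the $z=0$ strip is handled analogously, since $h_0(t)=g(t)(1-F_{0,X}(t))$ vanishes at $t=M_1$. Letting $\eta\downarrow 0$ after $n\to\infty$ combines these estimates into $I_2\Pconv 0$, and together with $I_1\Pconv 0$ the lemma follows.
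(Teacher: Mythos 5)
Your proof is correct in substance but takes a genuinely different route from the paper's. The paper splits $\K(\hn,\hfn)$ into three terms by comparing both $\hn$ and $\bar h_n$ to the true density $\hf$: it controls $\E\,\K(\hn,\hf)$ and $\K(\bar h_n,\hf)$ by citing Theorems 5 and 4 of Barron et al.\ (1992), and handles the remaining term $\int\hn\log(\hfn/\bar h_n)\,d\lambda$ via the uniform convergence $\norminfty{\hfn-\bar h_n}\to0$ together with a claimed uniform lower bound $\bar h_n\ge c_{\bar h}>0$. Your two-term decomposition through $\bar h_n$ collapses the paper's first two terms into the single discrete divergence $I_1=\K(\hn,\bar h_n)$ between empirical and expected cell probabilities, and your $\K\le\chi^2$ computation bounds its expectation by (number of cells)$/n$. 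This is more elementary and self-contained: no appeal to Barron et al., no use of the $\log$-integrability hypotheses at this stage, and it yields an explicit rate $O\big((n\dn\en)^{-1}\big)$ for the stochastic part. For the deterministic term $I_2$ you are in fact more careful than the paper: the claimed uniform lower bound on $\bar h_n$ fails on the first column of cells, where $h_1(t,z)=g(t)\partial_2F_0(t,z)$ vanishes and $\bar h_n=O(\dn)$, and your analysis of the integrable logarithmic singularity, $\int_0^{\dn}|\log(\dn/(2t))|\,dt=\dn\log2$ against $O_P(\dn)$ of $\hn$-mass, addresses exactly the spot the paper glosses over.

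Two points to tighten. First, your interior/boundary split leaves the annulus $\{\dn<t<\eta\}$ (and its mirror near $t=M_1$ on the $z=0$ strip) unaccounted for: there $\hf$ is of order $t$ rather than bounded below by a constant, so the interior argument does not apply, yet it is not the first column either. The fix is short: for $t\in A_{n,i}$ with $i\ge2$ both $\bar h_n$ and $\hfn$ lie between $c\,a_{n,i-1}$ and $C\,a_{n,i}$ with $a_{n,i}/a_{n,i-1}\le2$, so $|\log(\bar h_n/\hfn)|$ is uniformly bounded on that strip, whose $\hn$-mass is $O_P(\eta)$; letting $\eta\downarrow0$ then works as you intend. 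Second, you invoke ``$f_0$ stays away from zero on its support,'' which is the illustrative sufficient condition of Remark \ref{rm:pos_hnh} rather than the stated hypothesis (\ref{cond:h}); you should either assume it explicitly or recast your lower bounds in terms of (\ref{cond:h}). Since the paper's own treatment of the third term implicitly needs the same strengthening, this is not a defect relative to the paper, but it should be stated.
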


\begin{proof}
We can write
\begin{align}\label{rewr:K_hn_hfn}
\K\big(\hn, \hfn\big) =& \int_{\W_M}\hn(t,z) \log\frac{\hn(t,z)}{\hfn(t,z)}\,d\lambda(t,z)\nonumber\\
=& \int_{\W_M}\hn(t,z) \log\frac{\hn(t,z)}{\hf(t,z)}\,d\lambda(t,z) - \int_{\W_M}\hn(t,z) \log\frac{\bar h_n(t,z)}{\hf(t,z)}\,d\lambda(t,z) \\
& -\int_{\W_M}\hn(t,z) \log\frac{\hfn(t,z)}{\bar h_n(t,z)}\,d\lambda(t,z).\nonumber
\end{align}
The expectation of the first term converges to zero by \shortciteN{barron:92} Theorem 5 with $\mu = H_{f_0}$ (the distribution function of the observable vector $W$), $\nu=\lambda$ as defined in (\ref{def:lambda}) and $\cP = \big\{[0,M_1]\times \{0\}, [0,M_1]\times(0,M_2]\big\}$.\

By Fubini's theorem, the expectation of the second term equals
\be \int_{\W_M}\E\,\hn(t,z) \log\frac{\bar h_n(t,z)}{\hf(t,z)}\,d\lambda(t,z) = \int_{\W_M}\bar h_n(t,z) \log\frac{\bar h_n(t,z)}{\hf(t,z)}\,d\lambda(t,z) = \K\big(\bar h_n,\hf\big).\ee
This converges to zero by \shortciteN{barron:92} Theorem 4, so also the expectation of the second term in (\ref{rewr:K_hn_hfn}) converges to zero.

By (\ref{cond:h}), $\bar h_n(t,z) \geq c_{\bar h} >0$ for all $(t,z)\in\W_M$, so that (\ref{univ_conv:hfn_bhn}) implies that for any $\eps>0$ and $n$ sufficiently large
\be 1-\frac{\eps}{c_{\bar h}} \leq 1+\frac{\hfn(t,z)-\bar h_n(t,z)}{\bar h_n(t,z)} \leq 1+\frac{\eps}{c_{\bar h}}.\ee
Then
\begin{align*} \log\left(1-\frac{\eps}{c_{\bar h}}\right) &= \int_{\W_M}\bar h_n(t,z)\log\left(1-\frac{\eps}{c_{\bar h}}\right)\,d\lambda(t,z)\\
&\leq \int_{\W_M} \bar h_n(t,z)\log\left(1+\frac{\hfn(t,z)-\bar h_n(t,z)}{\bar h_n(t,z)}\right)\,d\lambda(t,z) \\
&\leq \int_{\W_M}\bar h_n(t,z)\log\left(1+\frac{\eps}{c_{\bar h}}\right)\,d\lambda(t,z) = \log\left(1+\frac{\eps}{c_{\bar h}}\right),\end{align*}
so that also the expectation of the third term in (\ref{rewr:K_hn_hfn}) converges to zero. Therefore, the expectation of $\K\big(\hn, \hfn\big)$ converges to zero, and because $\K\big(\hn, \hfn\big)\geq0$ a.s.\ the convergence in (\ref{conv:K_hn_hfn}) now follows.
\end{proof}

\section{The EM algorithm}
\label{sec:EM}
Let, as before, $\hat H_n$ denote the smoothed $\Hn{n}$, using the histograms on the rectangles $R_{i,j}\stackrel{\mbox{\small def}}=A_{n,i}\times B_{n,j}$ of the grid. The MSLE has to maximize
\be\int_{z>0}\log\partial_2F(t,z)\,d\hat H_n(t,z)+\int \log\big(1-F_X(t)\big)\,d\hat H_n(t,0),\ee
where
\be\lefteqn{\partial_2 F(t,z)=\en^{-1}\left\{\sum_{l:a_{n,l}<t}f_{l,j}+\frac{t-a_{n,i-1}}{\dn}f_{i,j}\right\},\ \text{if}\ (t,z)\in R_{i,j},}\\
\lefteqn{1-F_X(t)=\sum_{l:a_{n,l-1}>t}\sum_{j=1}^{l_n}f_{l,j}+\frac{a_{n,i}-t}{\dn}\sum_{j=1}^{l_n}f_{i,j}, \ \text{if}\ t\in A_{n,i},}\ee
and $\sum_{i,j} f_{i,j}=1$. Note that we do not parametrize by the densities, but by the total mass $f_{i,j}$ of the distribution on a cell $R_{i,j}=A_{n,i}\times B_{n,j}$. This amounts to the same for this model, however.

The $E$-step, if $t\in A_{n,i}$ and $z\in B_{n,j}$, and $z>0$, is given by
\be\lefteqn{\E\,\big(\log f(X,Z)|T=t,\,Z=z\big)}\\
&&=\sum_{k<i}\frac{f_{k,j}^{(m)}}{\sum_{l:a_{n,l}<t}f_{l,j}^{(m)}+\frac{t-a_{n,i-1}}{\dn}f_{i,j}^{(m)}}\,\log f_{k,j}+\frac{\frac{t-a_{n,i-1}}{\dn}f_{i,j}^{(m)}}{\sum_{l:a_{n,l}<t}f_{l,j}^{(m)}+\frac{t-a_{n,i-1}}{\dn}f_{i,j}^{(m)}}\,\log f_{i,j}.\ee
after the $m$th iteration, and if $z=0$ we get after the $m$th iteration
\be \lefteqn{\E\,\big(\log f(X,Z)|T=t,\,Z=0\big)}\\
&&=\sum_{k>i}\frac{F_k^{(m)}}{\sum_{l:a_{n,l-1}>t}F_l^{(m)}+\frac{a_{n,i}-t}{\dn}F_{i}^{(m)}}\,\log F_k+\frac{\frac{a_{n,i}-t}{\dn}F_i^{(m)}}{\sum_{l:a_{n,l-1}>t}F_l^{(m)}+\frac{a_{n,i}-t}{\dn}F_{i}^{(m)}}\,\log F_i,\ee
where $F_i=\sum_{j=1}^{l_n}f_{i,j}$. We have to integrate this over $(t,z)\in R_{i,j}$ w.r.t.\ the density $\hat h_n$, and then, in the $M$-step, we have to maximize the resulting expression w.r.t.\ $f_{i,j}$. This leads to the following combined $E$-step and (approximate) $M$-step (corresponding to the so-called ``self-consistency equations")
\begin{align*}
f_{k,j}^{(m+1)}=&\sum_{i>k}\int_{t\in A_{n,i},\,z\in B_{n,j}}\frac{\dn f_{k,j}^{(m)}}{\sum_{l:a_{n,l}<t}f_{l,j}^{(m)}\dn+(t-a_{n,i-1})f_{i,j}^{(m)}}\,\hij\,dt\,dv\\
&+\int_{t\in A_{n,k},\,z\in B_{n,j}}\frac{(t-a_{n,k})f_{k,j}^{(m)}}{\sum_{l:a_{n,l}<t}f_{l,j}^{(m)}\dn+\bigl(t-a_{n,k-1}\bigr)f_{k,j}^{(m)}}\hat h_{k,j}\,dt\,dv\\
&+\sum_{i<k}\int_{t\in A_{n,i},\,z\in B_{n,j}}\frac{\dn f_{k,j}^{(m)}}{\sum_{l:a_{n,l-1}>t}F_l^{(m)}\dn+(a_{n,i}-t)F_{i}^{(m)}}\hi\,dt\\
&+\int_{t\in A_{n,k}}\frac{(a_{n,k}-t)f_{k,j}^{(m)}}{\sum_{l:a_{n,l-1}>t}F_l^{(m)}\dn+(a_{n,k}-t)F_{k}^{(m)}}\,\hat h_k\,dt,
\end{align*}
where $\hij$ is the value of $\hat h_n(t,z)$ if $(t,z)\in R_{i,j}$ and $\hi$ is the value of $\hat h_n(t,0)$ if $t\in A_{n,i}$. Hence
\begin{align*}
f_{k,j}^{(m+1)}=&f_{k,j}^{(m)}\sum_{i>k}\log\left(1+f_{i,j}^{(m)}\bigm/\sum_{l<i}f_{l,j}^{(m)}\right)\,\frac{\hij}{f_{i,j}^{(m)}}\dn\en\\
&+f_{k,j}^{(m)}\left\{f_{k,j}^{(m)}-\sum_{l<k}f_{l,j}^{(m)}\log\left(1+f_{k,j}^{(m)}\bigm/\sum_{l<k}f_{l,j}^{(m)}\right)\right\}\,\frac{\hat h_{k,j}}{\left(f_{k,j}^{(m)}\right)^2}\dn\en\\
&+f_{k,j}^{(m)}\sum_{i<k}\log\left(1+F_i^{(m)}\bigm/\sum_{l>i}F_l^{(m)}\right)\,\frac{\hi}{F_i^{(m)}}\dn\\
&+f_{k,j}^{(m)}\left\{F_k^{(m)}-\sum_{l>k}F_l^{(m)}\log\left(1+F_k^{(m)}\bigm/\sum_{l>k}F_l^{(m)}\right)\right\}\,\frac{\hat h_k}{\left(F_k^{(m)}\right)^2}\dn,
\end{align*}
for $1<k<k_n$. For $k=1$ we get
\begin{align*}
f_{1j}^{(m+1)}=&\hat h_{1,j}\dn\en+f_{1,j}^{(m)}\sum_{i>1}\log\left(1+f_{i,j}^{(m)}\bigm/\sum_{l<i}f_{l,j}^{(m)}\right)\,\frac{\hij}{f_{i,j}^{(m)}}\dn\en\\
&+f_{1,j}^{(m)}\left\{F_1^{(m)}-\sum_{l>1}F_l^{(m)}\log\left(1+F_1^{(m)}\bigm/\sum_{l>1}F_l^{(m)}\right)\right\}\,\frac{\hat h_1}{\left(F_1^{(m)}\right)^2}\dn,
\end{align*}
and for $k=k_n$
\begin{align*}
f_{k_n,j}^{(m+1)}=&f_{k_n,j}^{(m)}\left\{f_{k_n,j}^{(m)}-\sum_{l<k_n}f_{l,j}^{(m)}\log\left(1+f_{k_n,j}^{(m)}\bigm/\sum_{l<k_n}f_{l,j}^{(m)}\right)\right\}\,\frac{\hat h_{k_n,j}}{\left(f_{k_n,j}^{(m)}\right)^2}\dn\en\\
&+f_{k_n,j}^{(m)}\sum_{i<k_n}\log\left(1+F_i^{(m)}\bigm/\sum_{l>i}F_l^{(m)}\right)\,\frac{\hi}{F_i^{(m)}}\dn+f_{k_n,j}^{(m)}\,\frac{\hat h_{k_n}}{F_{k_n}^{(m)}}\dn.
\end{align*}

These iterations were used until the absolute value of the scalar product of the vector of values $f_{i,j}^{(m)}$ with the vector of values of partial derivatives of the criterion function w.r.t.\ $f_{i,j}^{(m)}$ was smaller than $10^{-10}$ (here we use the so-called Fenchel duality condition). The algorithm is very fast and can easily be used for simulation purposes, also with sample sizes like $n=10\,000$.

Note that
\be\log\left(1+f_{i,j}^{(m)}\bigm/\sum_{l<i}f_{l,j}^{(m)}\right)\,\frac{\hij}{f_{i,j}^{(m)}}
\sim \frac{\hij}{\sum_{l<i}f_{l,j}^{(m)}},\qquad f_{i,j}^{(m)}\bigm/\sum_{l<i}f_{l,j}^{(m)}\downarrow0,\ee
and
\be\left\{f_{k,j}^{(m)}-\sum_{l<k}f_{l,j}^{(m)}\log\left(1+f_{k,j}^{(m)}\bigm/\sum_{l<k}f_{l,j}^{(m)}\right)\right\}\,\frac{\hat h_{k,j}}{\left(f_{k,j}^{(m)}\right)^2}\sim \frac{\hat h_{k,j}}{2\sum_{l<k}f_{l,j}^{(m)}},\qquad f_{k,j}^{(m)}\bigm/\sum_{l<k}f_{l,j}^{(m)}\downarrow0,\ee
which allows individual $f_{k,j}^{(m)}$ to tend to zero during the iterations. Similar relations hold for the $F_i^{(m)}$.

\bibliographystyle{mystyle}
\bibliography{references}

\begin{thebibliography}{20}
\providecommand{\natexlab}[1]{#1}
\providecommand{\url}[1]{\texttt{#1}}
\providecommand{\urlprefix}{URL }

\bibitem[\protect\citeauthoryear{Barron, Gy{\"o}rfi, and van~der Meulen}{Barron
  et~al.}{1992}]{barron:92}
Barron, A.~R., Gy{\"o}rfi, L., and van~der Meulen, E.~C. (1992), Distribution
  estimation consistent in total variation and in two types of information
  divergence, \textit{{IEEE} Transactions on Information Theory}, \textbf{38}:
  1437--1454.

\bibitem[\protect\citeauthoryear{Dabrowska}{Dabrowska}{1988}]{dabrowska:88}
Dabrowska, D.~M. (1988), Kaplan-{M}eier estimate on the plane, \textit{The
  Annals of Statistics}, \textbf{16}: 1475--1489.

\bibitem[\protect\citeauthoryear{Eggermont and LaRiccia}{Eggermont and
  LaRiccia}{2001}]{eggermont:01}
Eggermont, P. P.~B. and LaRiccia, V.~N. (2001), \textit{Maximum Penalized
  Likelihood Estimation}, Springer-Verlag, New York.

\bibitem[\protect\citeauthoryear{Groeneboom, Jongbloed, and Witte}{Groeneboom
  et~al.}{2010}]{GroJoWi10}
Groeneboom, P., Jongbloed, G., and Witte, B.~I. (2010), Maximum smoothed
  likelihood estimation and smoothed maximum likelihood estimation in the
  current status model, \textit{Annals of Statistics}, \textbf{38}: 352--387.

\bibitem[\protect\citeauthoryear{Groeneboom, Jongbloed, and Witte}{Groeneboom
  et~al.}{2011}]{witte:11spie}
Groeneboom, P., Jongbloed, G., and Witte, B.~I. (2011), Smooth plug-in inverse
  estimators in the current status continuous mark model, to appear in the {\it
  Scandinavian Journal of Statistics}.

\bibitem[\protect\citeauthoryear{Groeneboom, Maathuis, and Wellner}{Groeneboom
  et~al.}{2008{\natexlab{a}}}]{groeneboom:08}
Groeneboom, P., Maathuis, M.~H., and Wellner, J.~A. (2008{\natexlab{a}}),
  Current status data with competing risks: consistency and rates of
  convergence of the {MLE}, \textit{Annals of Statistics}, \textbf{36}:
  1031--1063.

\bibitem[\protect\citeauthoryear{Groeneboom, Maathuis, and Wellner}{Groeneboom
  et~al.}{2008{\natexlab{b}}}]{groeneboom:08b}
Groeneboom, P., Maathuis, M.~H., and Wellner, J.~A. (2008{\natexlab{b}}),
  Current status data with competing risks: limiting distribution of the {MLE},
  \textit{Annals of Statistics}, \textbf{36}: 1064--1089.

\bibitem[\protect\citeauthoryear{Groeneboom and Wellner}{Groeneboom and
  Wellner}{1992}]{groeneboom:92}
Groeneboom, P. and Wellner, J.~A. (1992), \textit{Information Bounds and
  Nonparametric Maximum Likelihood Estimation}, Birkh{\"a}user Verlag, Basel.

\bibitem[\protect\citeauthoryear{Huang and Louis}{Huang and
  Louis}{1998}]{huang:98}
Huang, Y. and Louis, T.~A. (1998), Nonparametric estimation of the joint
  distribution of survival time and mark variables, \textit{Biometrika},
  \textbf{85}: 7856--7984.

\bibitem[\protect\citeauthoryear{Hudgens, Maathuis, and Gilbert}{Hudgens
  et~al.}{2007}]{hudgens:07}
Hudgens, M.~G., Maathuis, M.~H., and Gilbert, P.~B. (2007), Nonparametric
  estimation of the joint distribution of a survival time subject to interval
  censoring and a continuous mark variable, \textit{Biometrics}, \textbf{63}:
  372--380.

\bibitem[\protect\citeauthoryear{Jongbloed}{Jongbloed}{2009}]{Jongbloed09}
Jongbloed, G. (2009), Consistent likelihood-based estimation of a star-shaped
  distribution, \textit{Metrika}, \textbf{69}: 265--282.

\bibitem[\protect\citeauthoryear{Kaplan and Meier}{Kaplan and
  Meier}{1958}]{kaplan:58}
Kaplan, E.~L. and Meier, P. (1958), Nonparametric estimation from incomplete
  data, \textit{Journal of the American Statistical Association}, \textbf{53}:
  457--481.

\bibitem[\protect\citeauthoryear{LeCam}{LeCam}{1986}]{lecam:86}
LeCam, L.~M. (1986), \textit{Asymptotic methods in statistical decision
  theory}, Springer, New York.

\bibitem[\protect\citeauthoryear{Maathuis}{Maathuis}{2005}]{maathuis:05}
Maathuis, M.~H. (2005), Reduction algorithm for the {NPMLE} for the
  distribution function of bivariate interval censored data, \textit{Journal of
  Computational and Graphical Statistics}, \textbf{14}: 352--362.

\bibitem[\protect\citeauthoryear{Maathuis and Wellner}{Maathuis and
  Wellner}{2008}]{maathuis:08}
Maathuis, M.~H. and Wellner, J.~A. (2008), Inconsistency of the {MLE} for the
  joint distribution of interval censored survival times and continuous marks,
  \textit{Scandinavian Journal of Statistics}, \textbf{35}: 83--103.

\bibitem[\protect\citeauthoryear{Song}{Song}{2001}]{song:01}
Song, S. (2001), \textit{Estimation with Bivariate Interval Censored data},
  Ph.{D}. dissertation, University of Washington, Seattle, USA.

\bibitem[\protect\citeauthoryear{Tsai, Leurgans, and Crowley}{Tsai
  et~al.}{1986}]{tsai:86}
Tsai, W.-Y., Leurgans, S., and Crowley, J. (1986), Nonparametric estimation of
  a bivariate survival function in the presence of censoring, \textit{Annals of
  Statistics}, \textbf{14}: 1351--1362.

\bibitem[\protect\citeauthoryear{van~de Geer}{van~de Geer}{2000}]{geer:00}
van~de Geer, S.~A. (2000), \textit{Empirical Processes in M-estimation},
  Cambridge University Press, New York.

\bibitem[\protect\citeauthoryear{van~der Laan}{van~der Laan}{1996}]{laan:96aos}
van~der Laan, M.~J. (1996), Efficient estimation in the bivariate censoring
  model and repairing {NPMLE}, \textit{Annals of Statistics}, \textbf{24}:
  596--627.

\bibitem[\protect\citeauthoryear{Witte}{Witte}{2011}]{witte:11}
Witte, B.~I. (2011), \textit{Current Status Censoring Models}, Ph.{D}.
  dissertation, Delft University of Technology,
  \urlprefix\url{http://www.birgitwitte.nl/statistics/content/theses/PhD.pdf}.

\end{thebibliography}

\end{document}